\def\Z{\mathbb{Z}}
\def\R{\mathbb{R}}
\newcommand{\indep}{\perp \!\!\! \perp}
\newcommand{\glb}{\operatorname{global}}
\newcommand{\pa}{\operatorname{Pa}}
\newcommand{\init}{\operatorname{in}}
\theoremstyle{plain}
\newtheorem{thm}{Theorem}[section]
\newtheorem{lem}[thm]{Lemma}
\newtheorem{cor}[thm]{Corollary}
\newtheorem{conj}[thm]{Conjecture}
\theoremstyle{definition}
\newenvironment{ex}
  {\pushQED{\qed}\exx}
  {\popQED\endexx}
\theoremstyle{remark}
\newtheorem{rmk}[thm]{Remark}
\newtheorem{que}{Question}
\title{Toric and non-toric Bayesian networks}
\author{Lisa Nicklasson\thanks{The author was supported by the grant KAW 2019.0512}}
\date{\small Università degli Studi di Genova\\ \texttt{nicklasson@dima.unige.it}}
\begin{document}
\maketitle

\begin{abstract}
\noindent In this paper we study Bayesian networks from a commutative algebra perspective. We characterize a class of toric Bayesian nets, and provide the first example of a Bayesian net which is provably non-toric under any linear change of variables. Concerning the class of toric Bayesian nets, we study their quadratic relations and prove a conjecture by Garcia, Stillman, and Sturmfels \cite{GSS} for this class. In addition, we give a necessary condition on the underlying directed acyclic graph for when all relations are quadratic. 
\end{abstract}

\section{Introduction}

A \emph{Bayesian network} is a graphical model given by a directed acyclic graph (DAG) where the vertices are random variables. In this paper we consider \emph{finite} Bayesian networks, in the sense that the DAG has a finite number of vertices, and each random variable takes a finite number of values. In general, a \emph{statistical model} is identified with a collection of probability distributions and can often, as in the case of finite Bayesian nets, be realized as a real algebraic variety. Such statistical models can be described explicitly by finding a parameterization of the variety, or implicitly as the zero set of a system of equations. For a Bayesian net $G$ the implicit description gives rise to a polynomial prime ideal which we denote $P_G$. 

Every finite Bayesian network has two graphical representations: the DAG mentioned above, and a \emph{staged tree}. Staged trees are rooted directed trees where a vertex coloring encodes invariances among conditional distributions, and describe a large class of statistical models, not all which are Bayesian networks. As staged trees were first introduced in \cite{SA} earlier work on Bayesian networks does not mention this representation. Nevertheless, staged trees proves to be a helpful tool when studying finite Bayesian networks, not least in the work presented in this paper.

In Section 2 we introduce staged trees, Bayesian networks, and their associated ideals in a fashion that suits the purposes of this paper. 
For a more thorough introduction to staged tree models the reader is referred to \cite{CGS}. An introduction to graphical models can be found for instance in \cite{Lauritzen}. 

\emph{Toric models} are statistical models which can be realized as toric varieties. Clearly, being toric opens up to using tools for from the well studied research field of toric geometry, and its benefits for statistical models are discussed for example in \cite{Rapallo}.
An example of a class of toric models are the discrete \emph{undirected graphical models} \cite{GMS}. Related to directed graphical models, the so called  
\emph{Conjunctive Bayesian networks} studied in \cite{BES} are toric. A class of toric staged trees is characterized in \cite{GMN}. In Section 3 of this paper we investigate which finite Bayesian networks are toric. Applying the result of \cite{GMN} we deduce a class of toric Bayesian nets, described by a property on the induced subgraph of the non-sinks of the DAG, see Theorem \ref{thm:toric_BN}. In general it is a difficult task to prove that a variety is not toric, under any linear change of coordinates. However, in Theorem \ref{thm:non-toric} we give the first example of a Bayesian network proved to not be toric in any choice of basis. 

An implicit description of a graphical model is often given in terms of conditional independence relations. Given a finite Bayesian network $G$, these relations gives rise to a quadratic ideal $I_G \subseteq P_G$. In Section 4 we examine the relationship between these two ideals. It is conjectured in \cite{GSS} that the quadratic part of $P_G$ is given precisely by $I_G$. In Theorem \ref{thm:global} we prove this conjecture for the toric Bayesian nets given by Theorem \ref{thm:toric_BN}. Moreover, we show in Theorem \ref{thm:quadratic} that, under the assumption of Theorem \ref{thm:toric_BN}, for the two ideals to be equal it is necessary that $G$ does not contain an induced cycle of length greater than three. 

In Section 5 we state three open problems, hoping to inspire further research on commutative algebra of Bayesian networks. 

\section{Preliminaries}
\subsection{Staged trees}
Consider a directed tree $T=(V,E)$ with a distinguished root, such that every edge is directed away from the root. For a vertex $v \in V$ let $E(v) \subset E$ denote the set of edges $v \to v'$ emanating from $v$. To each edge we assign a label, and we let $\Theta$ denote the set of edge labels. The function $\theta: E \to \Theta$ maps each edge to its edge label. The tree $T$ is called a \emph{staged tree} if 
\begin{itemize}
\item for every $v \in V$ the edges $E(v)$ gets distinct labels, i.\,e.\ $|\theta(E(v))|=|E(v)|$,
\item for every pair $v, w \in V$ the sets $\theta(E(v))$ and $\theta(E(w))$ are either equal or disjoint.  
\end{itemize}
The tree to the right in Figure \ref{fig:first_example} is an example of a staged tree. 
We define an equivalence relation $\sim$ on $V$ by $v \sim w$ if $\theta(E(v))=\theta(E(w))$. The equivalence classes are called \emph{stages}.
When drawing a staged tree $T$ we visualize the stages with a vertex coloring by giving vertices of the same stage the same color. To make the pictures in this paper as clear as possible, we choose to only give color to stages with more than one vertex. White vertices should be considered uncolored, meaning that each white vertex belongs to a stage consisting of only that single vertex. 

The \emph{level} of a vertex $v$ is the distance, i.\,e.\ the number of edges, between $v$ and the root. A staged tree is called \emph{stratified} if vertices of the same stage are on the same level, and all leaves are on the same level. All staged trees considered in this paper are stratified. 

Let $\rho: \Theta \to (0,1)$ be a map that assigns a real value in the open interval $(0,1)$ to each edge label, requiring 
\begin{equation}\label{eq:rho}
\sum_{e \in E(v)}\!\! \rho(\theta(e)) =1 \quad \text{for each} \ v \in V.
\end{equation}
Let $N$ be the number of leaves in the tree $T$. Each leaf $\ell$ is assigned a value in $(0,1)$ by taking the product $\prod \rho(\theta(e))$ over all edges $e$ on the directed path from the root to $\ell$. This produces a point in $\R^N$. The set of all points in $\R^N$ obtained by varying $\rho$ is the \emph{staged tree model} $\mathcal{M}_T$.

The staged tree model can also be described in terms of algebraic geometry. Let $\R[\boldsymbol{x}]=\R[x_1, \ldots, x_N]$ be the polynomial ring where each variable is associated to a leaf in the staged tree $T$, and let $\R[\Theta]$ be the polynomial ring on the edge labels. Let $\langle \boldsymbol{\theta} -1 \rangle$ denote the ideal of $\R[\Theta]$ generated by the relations
\[
\sum_{e \in E(v)}\!\!\! \theta(e) -1 \quad \text{for each} \ v \in V.
\]
Note that it is enough to consider one vertex from each stage. Now we define a homomorphism $\varphi: \R[\boldsymbol{x}] \to \R[\Theta]/\langle \boldsymbol{\theta} -1 \rangle$ by sending each variable to the product of the edge labels along the root-to-leaf path. 
Then
\[
\mathcal{M}_T = \mathcal{V}(\ker \varphi) \cap \Big\{ p \in \R^N \ \Big| \ \sum_{i=1}^N p_i =1 \ \text{and} \ 0<p_i<1 \ \text{for} \ i=1, \ldots, N \Big\}.
\]
Note that the ideal $\ker \varphi$ is not homogeneous as $\varphi(x_1+ \dots + x_N -1)=0$. For practical reasons we will work with a homogenized version of the map $\varphi$. To this end we introduce a homogenizing variable $z$, and let $\langle \boldsymbol{\theta} -z \rangle \subset \R[\Theta,z]$ denote the ideal generated by
\[
\sum_{e \in E(v)}\!\!\! \theta(e) -z \quad \text{for each} \ v \in V.
\]
For a given stratified staged tree $T$ we let $P_T$ denote the kernel of the map $\bar \varphi: \R[\boldsymbol{x}] \to \R[\Theta,z]/\langle \boldsymbol{\theta} -z \rangle$, defined in the same way as $\varphi$. Note that $P_T$ is a prime ideal as $\R[\Theta,z]/\langle \boldsymbol{\theta} -z \rangle$ is a domain. Moreover, $P_T$ is homogeneous as $\R[\Theta,z]/\langle \boldsymbol{\theta} -z \rangle$ is a graded ring, and $\bar \varphi$ maps homogeneous polynomials to homogeneous elements. 
 
 \begin{lem}
 With notation as above, and $T$ a stratified staged tree 
 \[\ker \varphi = P_T + \langle x_1 + \dots + x_N -1 \rangle.\]
 \end{lem}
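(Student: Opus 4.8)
The plan is to prove the two inclusions separately; the inclusion $\supseteq$ is quick, while $\subseteq$ is the substance of the lemma and is an instance of a dehomogenization argument. Write $\sigma := x_1 + \cdots + x_N$, so the added generator is $\sigma - 1$. As already noted in the text, $\varphi(\sigma - 1) = 0$, so $\sigma - 1 \in \ker\varphi$. For $P_T \subseteq \ker\varphi$ I would introduce the $\R$-algebra map $\bar\psi : \R[\Theta,z]/\langle \boldsymbol\theta - z\rangle \to \R[\Theta]/\langle \boldsymbol\theta - 1\rangle$ that sets $z = 1$; it is well defined because $\boldsymbol\theta - z \mapsto \boldsymbol\theta - 1$, and in fact it identifies the target with the quotient of the source by $\langle z - 1\rangle$, so $\ker\bar\psi = \langle z - 1\rangle$. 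Since each $\bar\varphi(x_i)$ is the path product, a monomial in the $\theta$'s with no $z$, applying $\bar\psi$ leaves it unchanged and equal to $\varphi(x_i)$. Hence $\varphi = \bar\psi \circ \bar\varphi$, which gives $P_T = \ker\bar\varphi \subseteq \ker\varphi$, and together with $\sigma - 1 \in \ker\varphi$ this yields $\supseteq$.

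For the reverse inclusion the key computation is the telescoping identity $\bar\varphi(\sigma) = z^d$, where $d$ is the common level of the leaves (the tree is stratified). This follows by induction on the depth, grouping the root-to-leaf path products by the first edge and repeatedly using the defining relations $\sum_{e \in E(v)} \theta(e) = z$. Now take $f \in \ker\varphi$ of top degree $m$ and write $f = \sum_j f_j$ in homogeneous components. Since $\sigma \equiv 1 \pmod{\langle \sigma - 1\rangle}$, I would replace each $f_j$ by $f_j\,\sigma^{m-j}$ to obtain $f \equiv F := \sum_j f_j\,\sigma^{m-j} \pmod{\langle \sigma - 1\rangle}$, where $F$ is homogeneous of degree $m$. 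It then suffices to prove $F \in P_T$, i.e. $\bar\varphi(F) = 0$.

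To establish this, note that $\bar\varphi$ is graded with $\bar\varphi(x_i)$ of degree $d$, so $\bar\varphi(f_j)$ is the degree-$jd$ part of $\bar\varphi(f)$. Using $\bar\varphi(\sigma) = z^d$ gives $\bar\varphi(F) = \sum_j \bar\varphi(f_j)\, z^{d(m-j)}$, which is precisely the $z$-homogenization of $\bar\varphi(f)$ to degree $md$. Since $f \in \ker\varphi$ and $\varphi = \bar\psi \circ \bar\varphi$, we have $\bar\varphi(f) \in \ker\bar\psi = \langle z - 1\rangle$; writing $\bar\varphi(f) = (z-1)q$ and expanding $q = \sum_k q_k$ into homogeneous pieces, the degree-$r$ part of $\bar\varphi(f)$ is $z\,q_{r-1} - q_r$, and a one-line telescoping shows that homogenizing $(z-1)q$ to any degree at least its top degree produces $0$. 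Hence $\bar\varphi(F) = 0$, so $F \in P_T$ and $f \in P_T + \langle \sigma - 1\rangle$, completing $\subseteq$.

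The conceptual heart — and the step I expect to require the most care — is recognizing that $\sigma - 1$ functions as a dehomogenization relation via the identity $\bar\varphi(\sigma) = z^d$, which is what matches the total-degree grading on $\R[\boldsymbol x]$ (scaled by $d$) to the grading on $\R[\Theta,z]/\langle \boldsymbol\theta - z\rangle$. Once this is in place the vanishing of $\bar\varphi(F)$ reduces to the standard fact that homogenizing an element of $\langle z - 1\rangle$ gives zero. The points to track carefully are the scaling factor $d$ throughout, and the degenerate case $\bar\varphi(f_m) = 0$, where the homogenization degree $md$ exceeds the true top degree of $\bar\varphi(f)$; padding with extra factors of $z$ then leaves the argument, and the conclusion $\bar\varphi(F)=0$, unaffected.
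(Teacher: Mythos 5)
Your proof is correct, and it rests on the same two pillars as the paper's own argument: the identity $\bar\varphi(x_1+\cdots+x_N)=z^d$ and the fact that $f\in\ker\varphi$ forces $\bar\varphi(f)$ to vanish upon setting $z=1$ (in your language, $\bar\varphi(f)\in\ker\bar\psi=\langle z-1\rangle$ in the quotient ring). Where you differ is in how the conclusion is extracted. The paper works entirely on the image side: it reduces $\bar\varphi(f)$ to a presentation with as few terms as possible modulo $\langle\boldsymbol{\theta}-z\rangle$, observes that all terms must cancel after substituting $z=1$, so that terms pair up as $m$ and $-z^{ds}m$, and from this asserts $\ker\varphi=\ker\bar\varphi+\bar\varphi^{-1}(\langle z^d-1\rangle)=\ker\bar\varphi+\langle x_1+\cdots+x_N-1\rangle$. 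You instead correct $f$ on the source side: you replace $f=\sum_j f_j$ by its $\sigma$-homogenization $F=\sum_j f_j\sigma^{m-j}$, congruent to $f$ modulo $\langle\sigma-1\rangle$, and then show $\bar\varphi(F)$ is the $z$-homogenization of an element of $\langle z-1\rangle$, hence zero by telescoping. The trade-off is that the paper's term-pairing sketch is shorter, but its final displayed equality --- in particular the inclusion $\bar\varphi^{-1}(\langle z^d-1\rangle)\subseteq\ker\bar\varphi+\langle x_1+\cdots+x_N-1\rangle$ --- is asserted rather than derived, whereas your homogenization-plus-telescoping step supplies exactly that missing justification; your factorization $\varphi=\bar\psi\circ\bar\varphi$ also disposes of the inclusion $\supseteq$ cleanly, and your handling of the degenerate case $\bar\varphi(f_m)=0$ is a detail the paper's formulation never confronts. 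In short: same dehomogenization idea, but a more structured and self-contained execution.
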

 \begin{proof}
 Let $d$ be the length of a root-to-leaf path in $T$. Take $f \in \ker \varphi$ and consider the image of $f$ under $\bar \varphi$. Each term in $\bar \varphi (f)$ has degree divisible by $d$. We reduce the number of terms in the presentation of $\bar \varphi (f)$ as much as possible working modulo $\langle \boldsymbol{\theta} -z \rangle$. The resulting representation is a polynomial where all terms cancel after substituting $z=1$. In other words, if we see a term $m$ in $\bar \varphi(f)$, there is also a term $-z^{ds}m$. It follows that
 \[ \ker \varphi = \ker \bar \varphi + \bar \varphi^{-1}(\langle z^d-1 \rangle) = \ker \bar \varphi + \langle x_1 + \dots +x_N -1 \rangle.\qedhere \] 
 \end{proof}
 See also the discussion about passing to projective space when studying statistical models for discrete random variables in \cite[Section 3.6]{Sullivant}.

\subsection{Bayesian networks}
Throughout this paper a Bayesian network is a DAG $G=(V,E)$ where the vertex set $V$ is a set of $n$ finite random variables. Each random variable $X$ takes a number of values say $1, \ldots, \kappa$ with some probabilities $\theta_1, \ldots, \theta_\kappa$, given the values of the parents of $X$.

\begin{figure}
\centering
\includegraphics[scale=1.1]{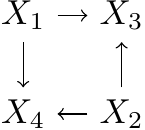}\!\!\!
\includegraphics[scale=0.97]{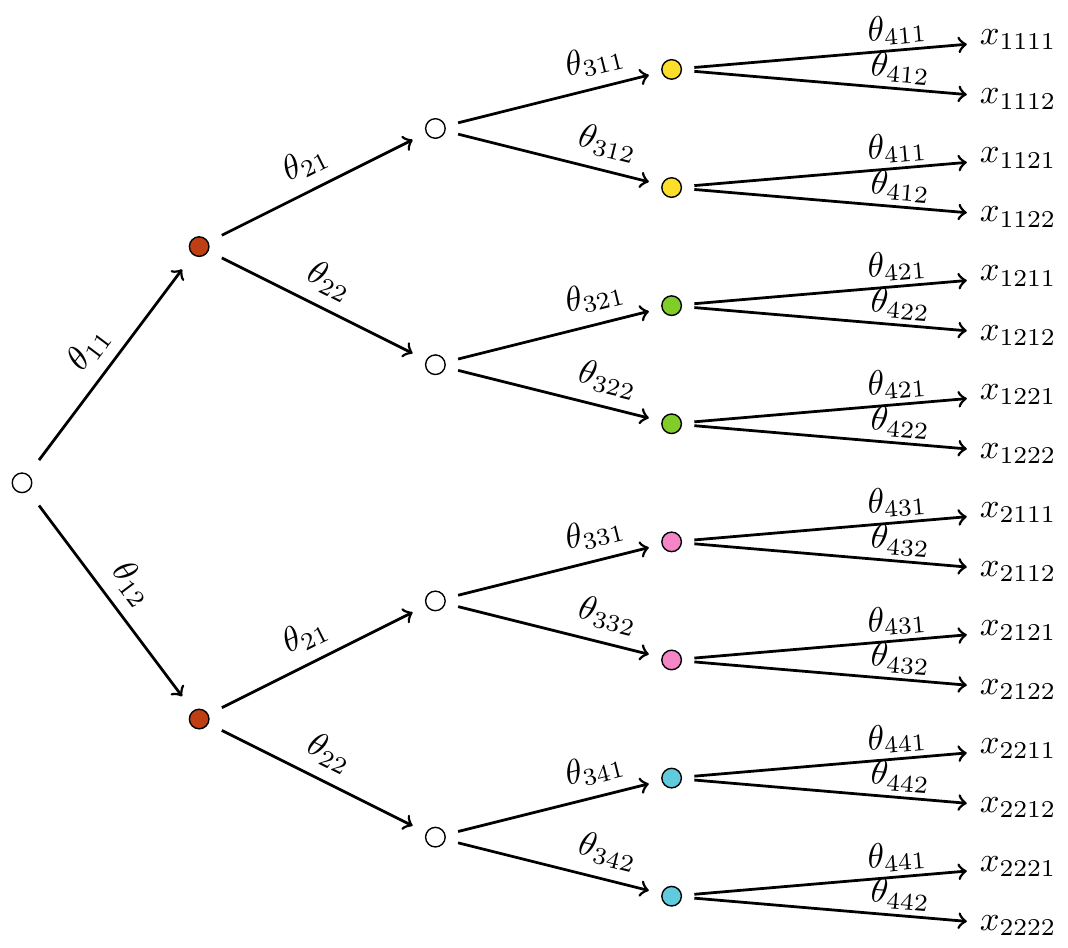}
\caption{A DAG and a staged tree representing a Bayesian network on four binary random variables}
\label{fig:first_example}
\end{figure}

 The Bayesian network $G$ can be represented by a stratified staged tree $T_G$ with leaves on level $n$, as we see for instance in Figure \ref{fig:first_example}. The staged tree is produced in the following way.  
 First we order the vertices $V=\{X_1, \ldots, X_n\}$ in a way such that if $X_i \to X_j$ is an edge, then $i<j$. Let $1, \ldots, \kappa_i$ denote the possible values of the finite random variable $X_i$. The vertices on level $i$ in $T_G$ have $\kappa_{i+1}$ outgoing edges, representing the possible values of $X_{i+1}$. A vertex $v$ on level $j$ can be identified with a vector $v=(v_1, \ldots, v_j)$ where $v_1, \ldots, v_j$ are the values of $X_1, \ldots, X_j$ defining the directed path from the root to $v$. Two vertices $v$ and $w$ on level $j$ are in the same stage if $v_i=w_i$ whenever $X_i \to X_j$ is an edge in $G$. 
When working with staged trees of Bayesian networks´ it is sometimes convenient to employ the notation 
 \[
 \theta(X_j=1 \ | \ X_i=v_i \ \text{for} \ X_i \in \pa(X_j)), \ldots, 
 \theta(X_j=\kappa_j \ | \ X_i=v_i \ \text{for} \ X_i \in \pa(X_j))
 \]
 for the labels of the edges $E(v)$. Here Pa$(X_j)$ stands for the set of parents of $X_j$ in $G$, that is the $X_i$'s such that $X_i \to X_j$ is an edge.  For example, if $v$ is any of the yellow vertices in Figure \ref{fig:first_example}, then the edge labels in the picture translates to the new notation as 
 \[
 \theta_{411} = \theta(X_4=1 \ | \ X_1=1, X_2=1), \quad \theta_{412} = \theta(X_4=2 \ | \ X_1=1, X_2=1).
 \]

We use the notation $P_G$, rather than $P_{T_G}$, for the prime ideal associated to $T_G$. When studying the ideal $P_G$ of a Bayesian network $G$ it can be convenient to index the variables in $\R[\boldsymbol{x}]$ by vectors $u=(u_1, \ldots, u_n)$ encoding the values of $X_1, \ldots, X_n$. Then $P_G$ is defined as the kernel of the map $\R[\boldsymbol{x}] \to \R[\Theta,z]/\langle \boldsymbol{\theta} -z \rangle$ defined by
\begin{equation}\label{eq:map_BN}
x_u \mapsto \prod_{j=1}^n \theta(X_j=u_j \ | \ X_i=u_i \ \text{for} \ X_i \in \pa(X_j)).
\end{equation}
\begin{rmk}\label{rmk:numbering}
The ideal $P_G$ is independent (up to a reindexing of the variables in $\R[\boldsymbol{x}]$) of the choice of ordering of the random variables $X_1, \ldots, X_n$, as long as the numbering respects the directions of the edges. This can be seen from \eqref{eq:map_BN}, as an admissible reordering of $X_1, \ldots, X_n$ only reorders the factors in the image. 
\end{rmk}

 Moreover, we allow replacing some of the $u_i$'s by the symbol + to denote the sum of all $x_u$'s with fixed values for a subset of the random variables $X_1, \ldots, X_n$. More precisely, let $\Lambda \subseteq \{1, \ldots, n\}$, and fix some values $1 \le u_i \le \kappa_i$ for each $i \in \Lambda$. Let $u+$ denote the $n$-vector where the $i$-th entry is $u_i$ if $i \in \Lambda$ and + otherwise. Then $x_{u+} = \sum x_v$ where the sum is taken over all integer vectors $v=(v_1, \ldots, v_n)$ such that $1 \le v_i \le \kappa_i$ and $v_i = u_i$ if $i \in \Lambda$.

When discussing the underlying DAG of a 
Bayesian network we will often denote the vertices by the integers $1, \ldots, n$ referring to the random variables $X_1, \ldots, X_n$. 

\subsection{Conditional independence}\label{subsec:ci-rel}
Let $G$ be a Bayesian network on $n$ vertices, and let $A, B,$ and $C$ be disjoint subsets of the vertices. We write $A \indep B \ | \ C$ for the conditional independence statement ``$A$ is independent of $B$ given $C$''. This should be understood as ``the probability of the random variables in $A$ taking any fixed values is independent of the values of the random variables in $B$, given the values of the random variables $C$''. The perhaps most intuitive conditional independence statements are those of the \emph{ordered Markov property}. The ordered Markov property is the set of conditional independence statements  
\[
j \indep (\{1, \ldots, j-1\} \setminus \pa(j)) \ | \ \pa(j).
\]
For example, the statement $4 \indep 3 \ | \ \{1,2\}$ holds for the DAG in Figure \ref{fig:first_example}.

Suppose we fix a value $1 \le a_i \le \kappa_i$ for each $i \in A$, and let $a$ denote this choice of values. In the same way we fix values $b$ and $c$ for the random variables in $B$ and $C$. Let $abc$ denote the $n$-vector where the $i$-th entry is $a_i, b_i,$ or $c_i$ if $i \in A, B$ or $C$, and + otherwise. Let $M_c$ denote a matrix where the rows are indexed by all combinations $a$ of values for the random variables in $A$, and the column by the different choices of $b$. The entry on position $(a,b)$ in $M_c$ is $x_{abc}$, and we define $I_c$ as the ideal generated by the $2 \times 2$-minors of $M_c$. In other words, the generators of $I_c \subset \R[\boldsymbol{x}]$ are quadratic forms
\begin{equation}\label{eq:indep_eq}
x_{abc}x_{a'b'c}-x_{a'bc}x_{ab'c}.
\end{equation}
We define the ideal $I_{A \indep B | C}$ as the sum of all $I_c$, for all different combinations $c$ of values of the random variables in $C$. 
The conditional independence statement $A \indep B \ | \ C$ is then equivalent to the ideal containment $I_{A \indep B | C}\subseteq P_G$. For a proof of this fact see \cite[Proposition 8.1]{Sturmfels_solving}. 

 A \emph{trail} in a DAG is a path where the directions of the edges are not taken into account. 
Following \cite{Lauritzen} the statement $A \indep B \ | \ C$ translates to graph theoretical terms as $A$ and $B$ being \emph{separated} by $C$ in the following sense. If there is a trail $\pi$ connecting a vertex from $A$ and a vertex from $B$ there must be a vertex $j$ on $\pi$ such that either
\begin{enumerate}
\item[\emph{S1.}] $j \in C$, and the directions of the edges of $\pi$ at $j$ are \emph{not} $\rightarrow j \leftarrow$, or 
\item[\emph{S2.}] $j \notin C$ and $j$ has no descendant in $C$, and the edges of $\pi$ at $j$ are directed as $\rightarrow j \leftarrow$.
\end{enumerate}
In particular, a vertex in $A$ can not have a child or a parent in $B$. 
The set of all conditional independence statements that holds for a Bayesian network $G$ is called the \emph{global Markov property} of $G$. Employing the same notation as \cite{GSS} we let $I_{\glb(G)}$ denote the sum of all ideals $I_{A \indep B | C}$ for which $A \indep B \ | \ C$ holds. 

By definition $I_{\glb(G)} \subseteq P_G$, and it follows by \cite[Theorem 8]{GSS} that $P_G$ is a minimal prime of $I_{\glb(G)}$.

\section{Conditions for toric Bayesian networks}

A \emph{toric ideal} in $\R[\boldsymbol{x}]$ is a prime binomial ideal. Equivalently, an ideal is toric if it is the kernel of a monomial map, i.\,e.\ a homomorphism between polynomial rings where each variable is mapped to a monomial. Note that the ring homomorphisms defining the ideals $P_G$ are not monomial maps, as the image lies in a quotient ring. Moreover, being a binomial ideal is a property that depends on the choice of basis for the polynomial ring $\R[\boldsymbol{x}]$. In this section we study the question: For which Bayesian networks $G$ are the ideals $P_G$ toric, after a suitable linear change of variables?

The most immediate class of toric Bayesian nets are those for which the associated prime ideal is binomial in the given variables. 
A DAG $G$ is called \emph{perfect} if for every vertex $i$ the induced undirected subgraph on Pa$(i)$ is a complete graph. 
It was first proved in \cite[Proposition 3.28]{Lauritzen} that the ideal $P_G$ is binomial when $G$ is perfect. Moreover, by \cite[Theorem 3.1]{Duarte-Solus} $G$ is perfect precisely when the staged tree $T_G$ is a so called \emph{balanced tree}, which in turn is equivalent to the associated prime ideal being binomial in the given variables, \cite{Duarte-Goergen}. We summarize this result in Theorem \ref{thm:perfect}. 

\begin{thm}[\cite{Duarte-Goergen}, \cite{Duarte-Solus}, \cite{Lauritzen}]\label{thm:perfect}
Let $G$ be a Bayesian network. The ideal $P_G$ is binomial in the given variables if and only if $G$ is perfect.
\end{thm}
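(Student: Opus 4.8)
The plan is to reduce binomiality of $P_G$ to a purely combinatorial property of the staged tree $T_G$, and then to translate that property into perfectness of $G$. As a first orientation I would note that the monomial map obtained from \eqref{eq:map_BN} by \emph{forgetting} the relations $\langle \boldsymbol{\theta}-z\rangle$ is injective on the leaf variables: if $u\neq u'$ they differ in some coordinate $u_j$, and then their images differ in the factor indexed by $j$, since the edge labels $\theta(X_j=\cdot\mid\cdots)$ at distinct levels are distinct variables. Hence the kernel of that pure monomial map is zero, and every relation in $P_G$ is a consequence of the normalization relations $\sum_{u_j}\theta(X_j=u_j\mid X_i=u_i,\ X_i\in\pa(j))=z$ alone. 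This isolates the source of any non-binomiality: summing a leaf variable over the value of an interior vertex $X_j$ couples the factor at $j$ with the factors at the children of $j$, and the resulting sum collapses to a single monomial precisely when nothing downstream depends on $u_j$ in an obstructed way.

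The second step is to make this precise through the interpolating (floret) polynomials $t_v$ of $T_G$, defined recursively by $t_\ell=1$ at leaves and $t_v=\sum_{v\to v'}\theta(v\to v')\,t_{v'}$, and to use the fact that $P_G$ is binomial exactly when $T_G$ is \emph{balanced} in the sense of \cite{Duarte-Goergen}, a polynomial condition asserting that at any two vertices of a common stage the floret polynomials of the corresponding subtrees are proportional. I would then invoke \cite[Theorem 3.1]{Duarte-Solus}, which identifies balancedness of $T_G$ with the requirement that $\pa(j)$ induce a clique for every vertex $j$, i.e.\ with $G$ being perfect. Chaining the two equivalences yields the theorem.

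For the \emph{if} direction I would also keep the classical self-contained argument in mind as a cross-check: when $G$ is perfect its moralization is chordal, the model coincides with a decomposable undirected graphical model, and such a model factors as a ratio of clique- and separator-marginals; clearing denominators shows $P_G$ is generated by the $2\times 2$ minors \eqref{eq:indep_eq} coming from the ordered Markov statements, all of which are binomial. This recovers \cite[Proposition 3.28]{Lauritzen}.

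The hard part will be the \emph{only if} direction: from a non-perfect $G$ I obtain a vertex $j$ with two non-adjacent parents $a,b$, and I must produce a \emph{minimal} generator of $P_G$ of degree at least three and argue that no choice of basis of the given variables makes $P_G$ binomial. Exhibiting a trinomial in $P_G$ is routine --- it comes from marginalizing over $X_j$ while $a$ and $b$ remain genuinely coupled --- but showing that such a relation is \emph{forced}, so that $P_G$ admits no binomial generating set, is the real obstacle. I expect to dispatch it exactly via the failure of balancedness at the stage corresponding to $j$: non-adjacency of $a$ and $b$ makes two floret polynomials fail to be proportional, and the floret analysis then manufactures a provably non-binomial minimal generator.
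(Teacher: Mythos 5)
Your second and third paragraphs are exactly the paper's proof: the theorem is established there purely by chaining the cited results --- $G$ perfect $\Leftrightarrow$ $T_G$ balanced by \cite[Theorem 3.1]{Duarte-Solus}, and $T_G$ balanced $\Leftrightarrow$ $P_G$ binomial in the given variables by \cite{Duarte-Goergen}, with \cite[Proposition 3.28]{Lauritzen} as the original source for the ``if'' direction. So the core of your proposal is correct and identical in approach to the paper, which offers no argument beyond these citations.

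Two problems remain, one mathematical and one structural. The mathematical one: your opening claim that the pure monomial map (obtained by forgetting $\langle \boldsymbol{\theta}-z\rangle$) has zero kernel is false. Sending distinct variables to distinct monomials does not make the image monomials algebraically independent. Already for the DAG on two binary vertices with no edge, the map $x_{ij}\mapsto \theta_{1i}\theta_{2j}$ is injective on the variables, yet its kernel contains the Segre relation $x_{11}x_{22}-x_{12}x_{21}$; more generally, every conditional independence binomial \eqref{eq:indep_eq} in which $A\cup B\cup C$ is the whole vertex set already lies in the kernel of the pure monomial map. So it is not true that ``every relation in $P_G$ is a consequence of the normalization relations alone'' --- most of the binomial structure of $P_G$ is present before passing to the quotient ring. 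Nothing in your citation-chaining argument uses this claim, but it would derail the floret analysis you sketch at the end. The structural problem: your final paragraph is solving a problem that no longer exists. Once you invoke the equivalences of \cite{Duarte-Goergen} and \cite{Duarte-Solus}, the ``only if'' direction is complete (non-perfect $\Rightarrow$ unbalanced $\Rightarrow$ not binomial in the given variables); no construction of a non-binomial minimal generator is required, and attempting one amounts to re-proving the cited results. Note also that the theorem concerns binomiality \emph{in the given variables} only; ruling out binomiality under every linear change of basis is a far harder question --- it is what Theorem \ref{thm:non-toric} accomplishes for one specific network --- and should not be folded into this statement.
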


In \cite{GMN} the class of toric staged trees is extended by considering a change of variables. We shall apply this result to obtain a class of toric Bayesian nets. To do this we restate the special case of \cite[Theorem 5.4]{GMN} concerning stratified staged trees as Theorem \ref{thm:toric_staged_trees}. A \emph{subtree} of a staged tree $T$ always inherits the edge labels from $T$. For two subtrees to be \emph{identical} their edge labelings must be identical. For a vertex $v$ in a staged tree $T$, the \emph{induced subtree of $v$} is the subtree $T(v)$ containing every directed path starting in $v$. 

\begin{thm}[\cite{GMN}]\label{thm:toric_staged_trees}
For a stratified staged tree $T$ and an integer $d \ge 0$, let $S$ be the subtree with same root as $T$ and with leaves on level $d$. Suppose
\begin{enumerate}
\item $S$ is balanced, and
\item for any vertex $v$ on level $\ge d$ the induced subtrees of the children of $v$ are identical.
\end{enumerate}
Then the prime ideal associated to $T$ is toric after a linear change of variables. 
\end{thm}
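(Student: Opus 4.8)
The plan is to exhibit an explicit invertible linear change of the leaf variables $x_u$ after which $\bar\varphi$ sends each new coordinate to a monomial in $\R[\Theta,z]/\langle\boldsymbol\theta-z\rangle$ and $P_T$ is generated by binomials. Since an invertible linear change preserves primeness and $P_T$ is prime, the only real content is binomiality. The subtlety to keep in mind throughout is that a coordinate mapping to a monomial does \emph{not} by itself force a binomial kernel: the defining relations $\sum_{e\in E(v)}\theta(e)=z$ of the target can collapse monomials additively. Controlling this collapse is exactly what hypotheses (1) and (2) are for.

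First I would record the structure forced by hypothesis (2). Write a leaf as $u=(u_{\le d},u_{>d})$, where $u_{\le d}$ names the level-$d$ vertex $v$ reached from the root and $u_{>d}$ names the continuation. Unwinding ``the induced subtrees of the children are identical'' level by level for every vertex of level $\ge d$ shows that each induced subtree $T(v)$ with $v$ on level $d$ is \emph{homogeneous}: along $T(v)$ the outgoing floret is constant across each level, so $T(v)$ is the staged tree of a product of independent variables and $\bar\varphi(x_{v,\,u_{>d}})=s(v)\prod_{i>d}\theta^{(i)}_{u_i}$, where $s(v)$ is $\bar\varphi$ of the root-to-$v$ path. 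Two consequences follow: summing over the tail telescopes through the relations $\sum\theta=z$, giving $\bar\varphi(x_{v+})=s(v)\,z^{\,n-d}$, a pure ``head'' monomial; and, for each fixed $v$, the block $\{x_{v,\,u_{>d}}\}$ is an independence (Segre) model, already toric.

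Next comes the change of variables and the tension it must resolve. Marginalizing over the tail recovers the model $\mathcal M_S$, and since $S$ is balanced the relations among the marginals $x_{v+}$ are binomial (the balanced $\Rightarrow$ binomial direction, cf.\ \cite{Duarte-Goergen} and Theorem \ref{thm:perfect}). But these head relations are binomial only in the marginal coordinates $x_{v+}=\sum_{u_{>d}}x_{v,\,u_{>d}}$, whereas the tail relations are binomial only in the product coordinates $x_{v,\,u_{>d}}$; the two bases conflict, and reconciling them is precisely why a nontrivial linear change is needed. I would resolve this blockwise: within each block apply the tensor product over the tail levels of the elementary substitution that replaces a floret by its sum together with all but one of its entries. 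A direct computation shows the new block coordinate indexed by $\alpha$ maps to $s(v)\prod_{i>d}c_i$, where $c_i=z$ if $\alpha_i$ selects the sum and $c_i=\theta^{(i)}_{\alpha_i}$ otherwise; so every new coordinate maps to a monomial, the all-sums coordinate is exactly $x_{v+}$, and the within-block relations stay binomial, while the head relations are now binomial in the coordinate $x_{v+}$.

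The main obstacle is \emph{completeness and binomiality}: proving that, after this change, the binomials coming from the balanced head $S$ together with the homogeneous tails generate all of $P_T$, with no surviving higher-degree or non-binomial relation. I would attack this by checking that the assembled map is a genuine monomial parametrization in a single set of free torus parameters, using balancedness of $S$ to monomially parametrize the marginals and homogeneity to parametrize each block, so that $\mathcal M_T$ is the positive part of a toric variety; equivalently, one realizes $\mathcal M_T$ as the toric fiber product of the toric ideal $P_S$ with the tail independence models, graded by the level-$d$ leaves, and invokes that toric fiber products of toric ideals are toric \cite{Sullivant}. It then remains only to dispatch the degenerate ranges: for $d\ge n$ the tree equals $S$ and is binomial by the balanced criterion, while for $d=0$ the whole tree is homogeneous and $\mathcal M_T$ is an independence model; both are toric, so the argument closes, recovering the cited special case of \cite{GMN}.
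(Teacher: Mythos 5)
First, a point of comparison: the paper contains no proof of this statement --- it is imported from \cite[Theorem 5.4]{GMN}, and the only trace of its proof here is Remark \ref{rmk:toric}, which records the change of variables produced by the algorithm in that proof. Your structural analysis of hypothesis (2) (each subtree $T(v)$ rooted at level $d$ is homogeneous, $\bar\varphi(x_{v,u_{>d}})=s(v)\prod_{i>d}\theta^{(i)}_{u_i}$, and $\bar\varphi(x_{v+})=s(v)z^{\,n-d}$ by telescoping) matches the paper's reformulation of condition \emph{2} as \emph{2'}, and your blockwise change of variables is exactly the one in Remark \ref{rmk:toric}. Up to that point you are on track.

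The genuine gap is the step you yourself call the main obstacle, and neither of your proposed routes closes it. Route one asks for ``a genuine monomial parametrization in a single set of free torus parameters,'' but no such thing is available from the data: balancedness of $S$ gives only the abstract fact that $P_S$ is binomial (Theorem \ref{thm:perfect}, \cite{Duarte-Goergen}); the actual head parametrization $y_v\mapsto s(v)$ is \emph{not} a monomial map, and you cannot simply replace it by an abstract monomial map with the same kernel, because the head and the tails are not independent --- they share the homogenizing variable $z$. Route two (toric fiber product) founders on exactly this point. Write $\phi\colon \R[\boldsymbol{x}]\to\R[y,w]$, $x_{v,\alpha}\mapsto y_v w_{v,\alpha}$, let $J$ be the kernel of the tail evaluation $w_{v,\alpha}\mapsto m_{v,\alpha}$, and let $K$ be the kernel of the joint evaluation $y_v\mapsto s(v)$, $w_{v,\alpha}\mapsto m_{v,\alpha}$. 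Then $P_T=\phi^{-1}(K)$, while the toric fiber product is $\phi^{-1}(P_S+J)$; toricness of the latter is indeed formal, but the needed identity $P_T=\phi^{-1}(P_S+J)$ is not, because $K\neq P_S+J$. Concretely, telescoping over the head gives $\sum_v s(v)=z^{d}$, while the all-sums tail coordinate of any block $v_0$ satisfies $m_{v_0,+\cdots+}=z^{\,n-d}$; hence $\bigl(\sum_v y_v\bigr)^{n-d}-w_{v_0,+\cdots+}^{\,d}$ lies in $K$, yet it does not lie in $P_S+J$, since in $(\R[y]/P_S)\otimes_\R(\R[w]/J)$ it has the form $a\otimes 1-1\otimes b$ with $a,b$ nonzero of positive degree. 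So head and tails genuinely are entangled through $z$, and the entire content of the theorem is the lemma that this entanglement contributes nothing after pulling back along $\phi$, whose image consists only of bidegree-balanced polynomials. That lemma --- the actual work in \cite{GMN} --- is asserted rather than proved in your final paragraph, so as written the proposal reduces the theorem to a restatement of itself. A smaller imprecision: your notation $\theta^{(i)}_{u_i}$ hides that tail labels may depend on the block $v$, and distinct blocks may or may not share stages; this does not affect the change of variables, but it means ``the tail independence models'' in your fiber-product description must be the single toric ideal $J$ of the joint tail map, not a juxtaposition of independent per-block Segre ideals.
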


Let us analyze the second condition. Take a vertex $v$ on level $d$, and let $v_1$ and $v_2$ be children of $v$. Then the induced subtrees $T(v_1)$ and $T(v_2)$ are identical, so in particular $v_1 \sim v_2$. But condition {\it 2} should also hold for $v_1$, so all children of $v_1$ must be in the same stage. As $T(v_1)$ and $T(v_2)$ are identical the $k$-th children of $v_1$ and $v_2$ must be in the same stage. Then in fact all grandchildren of $v$ must be in the same stage. Continuing this argument we can rephrase condition {\it 2} as
\begin{enumerate}
\item[{\it 2'.}]For any vertex $u$ on level $d$ and any vertices $v, w$ in $T(u)$ on the same level, $v \sim w$.  
\end{enumerate}
 
Theorem \ref{thm:toric_staged_trees} translates to the following statement on the DAG $G$, in the case $T=T_G$.

\begin{thm}\label{thm:toric_BN}
Let $G$ be a Bayesian network for which the induced subgraph on the non-sinks is perfect. 
Then the ideal $P_G$ is toric, after a linear change of variables. 
\end{thm}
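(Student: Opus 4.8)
The plan is to invoke Theorem \ref{thm:toric_staged_trees} applied to the staged tree $T = T_G$, choosing the cutoff level $d$ so that it separates the non-sinks from the sinks. By Remark \ref{rmk:numbering} we are free to pick any topological ordering of the vertices. Since a sink has no outgoing edges, no admissibility constraint forces it to precede another vertex, so I would first reorder $X_1, \dots, X_n$ so that the non-sinks are $X_1, \dots, X_k$ and the sinks are $X_{k+1}, \dots, X_n$; this is again a valid ordering because every edge into a sink comes from a non-sink, and there are no edges between two sinks. I then set $d = k$.

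First I would verify condition \emph{1}, that the subtree $S$ of $T_G$ with leaves on level $d=k$ is balanced. The key observation is that $S$ is itself the staged tree $T_{G'}$ of the Bayesian network $G'$ given by the induced subgraph on the non-sinks: for every $j \le k$ all parents of $X_j$ have index $< j \le k$ and hence lie among the non-sinks, so the parent sets, and therefore the stages on levels $0, \dots, k$, agree whether computed in $G$ or in $G'$. Since $G'$ is perfect by hypothesis, the perfect–balanced equivalence recorded in the discussion preceding Theorem \ref{thm:perfect} shows that $S = T_{G'}$ is balanced.

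Next I would verify condition \emph{2} in its equivalent form \emph{2'}. Fix a vertex $u$ on level $d=k$ and two vertices $v, w$ of the induced subtree of $u$ lying on a common level $j$; necessarily $j \ge k$, and for $j = k$ we have $v = w = u$, so assume $j > k$, whence $X_j$ is a sink. The crucial structural fact is that a sink has only non-sink parents: any parent $X_i$ of $X_j$ gives an edge $X_i \to X_j$, which is an outgoing edge of $X_i$, so $X_i$ is not a sink and $i \le k$. Because $v$ and $w$ both descend from $u$, they agree with $u$, and hence with each other, in every coordinate of index $\le k$; in particular $v_i = w_i$ for every parent $X_i$ of $X_j$. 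By the staging rule for $T_G$ this means $v \sim w$, establishing \emph{2'}.

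With both hypotheses of Theorem \ref{thm:toric_staged_trees} confirmed, $P_{T_G} = P_G$ is toric after a linear change of variables. The point requiring the most care is the verification of \emph{2'}: it rests entirely on the observation that the stage of a vertex on a sink-level is determined by its non-sink coordinates, so that fixing a common ancestor at level $k$ collapses all of its descendants on each subsequent level into a single stage. The identification of $S$ with a genuine Bayesian-network staged tree, so that Theorem \ref{thm:perfect} applies verbatim, is the other place where the chosen ordering, with non-sinks preceding sinks, is essential.
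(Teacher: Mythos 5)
Your proof is correct and takes essentially the same approach as the paper's: reorder the vertices so that the non-sinks $1,\dots,d$ come first, apply Theorem \ref{thm:toric_staged_trees} with $S$ the staged tree of the perfect induced subgraph on the non-sinks (balanced by the perfect--balanced equivalence recorded before Theorem \ref{thm:perfect}), and verify condition \emph{2'} from the fact that $\pa(j)\subseteq\{1,\dots,d\}$ for every sink $j$ together with descendants of a level-$d$ vertex agreeing in their first $d$ coordinates. You merely make explicit some details (validity of the reordering, the identification of $S$ with the staged tree of the induced Bayesian network) that the paper's terser proof leaves implicit.
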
  
\begin{proof}
Suppose the induced subgraph on the non-sinks of $G$ if perfect. We may assume that the vertices are ordered so that the induced subgraph $H$ on the vertices $1, \ldots, d$ is perfect, and that every vertex $i >d$ is a sink. Then condition {\it 1} of Theorem \ref{thm:toric_staged_trees} is satisfied with $S=T_H$. Let now $u$ be a vertex on level $d$ in the staged tree $T_G$, and take two vertices $v$, $w$ on level $j$ both contained in $T(u)$. As $\pa(j) \subseteq \{1, \ldots, d\}$ it follows that $v \sim w$. 
\end{proof}

\begin{rmk}\label{rmk:toric}
The algorithm in the proof of \cite[Theorem 5.4]{GMN} produces the change of variables which makes the ideal $P_G$ binomial. In the case of Theorem \ref{thm:toric_BN} we get the following. For a given integer vector $u=(u_1, \ldots, u_n)$ with $1 \le u_i \le \kappa_i$ let $u+$ be the vector obtained from $u$ by replacing $u_i$ by the symbol + if $i$ is a sink and $u_i=\kappa_i$. The set of all linear forms $x_{u+}$ obtained in this way is the basis of $\R[\boldsymbol{x}]$ for which $P_G$ is binomial. The image of $x_{u+}$ is then obtained by applying the substitution 
\[
\theta(X_j=\kappa_j \ | \ X_i = u_i \ \text{for} \ i \in \pa(j)) \mapsto z \quad \text{if} \ j \ \text{is a sink}
\]
to \eqref{eq:map_BN}. This gives the parametrization of the toric variety $\mathcal{V}(P_G)$.   
\end{rmk}

\begin{ex}
Let $G$ be the Bayesian network of four binary random variables given in Figure \ref{fig:first_example}. The DAG is not perfect, as Pa$(3)=\{1,2\}$ but there is no edge between 1 and 2. But if we remove the two sinks 3 and 4 the resulting graph is perfect, so $P_G$ is toric by Theorem \ref{thm:toric_BN}. The basis for $\R[\boldsymbol{x}]$ and the monomial parameterization is given by
\[
\begin{aligned}
x_{1111} & \mapsto \theta_{11}\theta_{21}\theta_{311}\theta_{411} \\
x_{111+} & \mapsto \theta_{11}\theta_{21}\theta_{311}z \\
x_{11+1} & \mapsto \theta_{11}\theta_{21}z\theta_{411} \\
x_{11++} & \mapsto \theta_{11}\theta_{21}z^2 \\
x_{1211} & \mapsto \theta_{11}\theta_{22}\theta_{321}\theta_{421} \\
x_{121+} & \mapsto \theta_{11}\theta_{22}\theta_{321}z \\
x_{12+1} & \mapsto \theta_{11}\theta_{22}z\theta_{421} \\
x_{12++} & \mapsto \theta_{11}\theta_{22}z^2
\end{aligned}
\qquad \quad
\begin{aligned}
x_{2111} & \mapsto \theta_{12}\theta_{21}\theta_{331}\theta_{431} \\
x_{211+} & \mapsto \theta_{12}\theta_{21}\theta_{331}z \\
x_{21+1} & \mapsto \theta_{12}\theta_{21}z\theta_{431} \\
x_{21++} & \mapsto \theta_{12}\theta_{21}z^2 \\
x_{2211} & \mapsto \theta_{12}\theta_{22}\theta_{341}\theta_{441} \\
x_{221+} & \mapsto \theta_{12}\theta_{22}\theta_{341}z \\
x_{22+1} & \mapsto \theta_{12}\theta_{22}z\theta_{441} \\
x_{22++} & \mapsto \theta_{12}\theta_{22}z^2.
\end{aligned} 
\vspace{-20pt}
\]
\end{ex}

Now the question is: Are there more toric Bayesian networks, not characterized by Theorem \ref{thm:toric_BN}? In \cite[Conjecture 7.1]{GMN} it is conjectured that all Bayesian networks are toric. However, Theorem \ref{thm:non-toric} provides a counterexample to the conjecture. 

\begin{figure}
\centering
\includegraphics[scale=1.3]{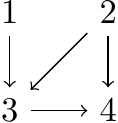}
\hspace{2cm}
\includegraphics[scale=0.9]{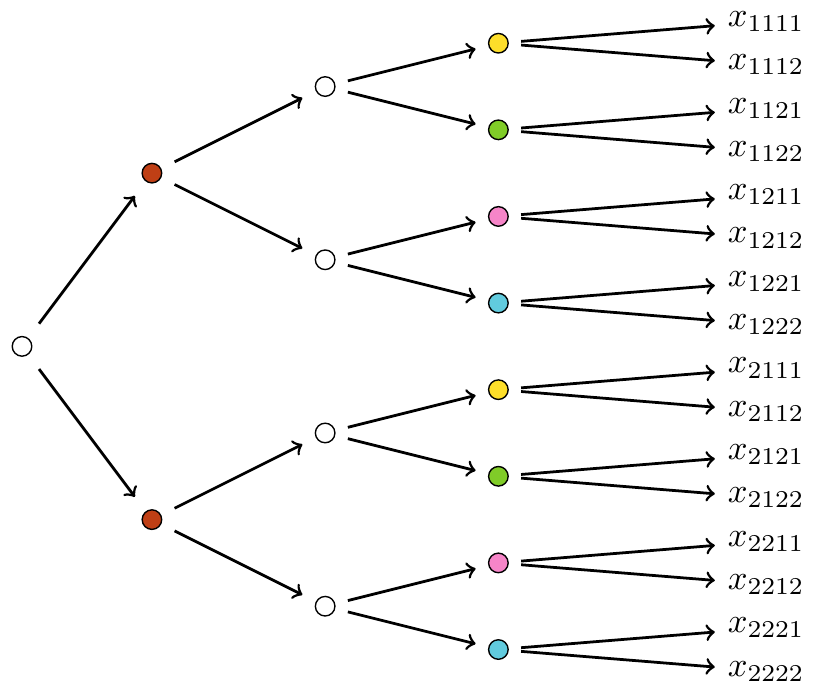}
\caption{DAG and staged tree representation of a non-toric Bayesian network}
\label{fig:non-toric}
\end{figure}

\begin{thm}\label{thm:non-toric}
Let $G$ be the Bayesian network on four binary random variables illustrated in Figure \ref{fig:non-toric}. Then there is no linear change of basis for which the prime ideal $P_G$ is toric.  
\end{thm}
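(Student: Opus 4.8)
The plan is to reduce the statement to a rank condition on the space of quadrics in $P_G$ and then to exhibit a concrete violation of that condition. The key observation is the following basis-free necessary condition for toricity. If $P_G$ were binomial in some basis $\boldsymbol{y} = A\boldsymbol{x}$, then the graded piece $(P_G)_2$ would be spanned, in the $\boldsymbol{y}$-coordinates, by quadratic binomials $y_iy_j - y_ky_l$. Viewed as a symmetric bilinear form, each such binomial has rank at most $4$, since each degree-two monomial contributes rank at most $2$. The rank of a quadratic form is invariant under a linear change of variables (it is a congruence $M \mapsto A^{\top} M A$), and forming linear spans commutes with the $GL$-action; therefore $(P_G)_2$, regarded inside the space of quadratic forms in the $16$ variables, would have to be spanned by forms of rank at most $4$. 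This spanning property is intrinsic to the subspace $(P_G)_2$ and does not depend on the chosen basis. Hence it suffices to prove that the forms of rank at most $4$ lying in $(P_G)_2$ span only a proper subspace.

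First I would compute $(P_G)_2$ explicitly for the network of Figure \ref{fig:non-toric} from the parametrization \eqref{eq:map_BN}. The conditional independence statements of Section \ref{subsec:ci-rel} produce a distinguished subspace $W := (I_{\glb(G)})_2 \subseteq (P_G)_2$ spanned by the $2\times 2$-minors \eqref{eq:indep_eq}; these generators are binomials in four distinct variables and hence have rank exactly $4$. The crucial structural input is that $(P_G)_2$ is strictly larger than $W$: there is a further quadric $q_0 \in (P_G)_2 \setminus W$ not accounted for by the global Markov property. I would fix such a $q_0$ and record a decomposition $(P_G)_2 = W \oplus \langle q_0 \rangle$ (adjusting the argument if the codimension turns out to be larger than one).

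The heart of the argument is then a rank computation: I claim that every quadratic form of rank at most $4$ in $(P_G)_2$ already lies in $W$, equivalently that $w + c\,q_0$ has rank at least $5$ whenever $c \neq 0$. Writing the symmetric matrix $M(w,c)$ of $w + c\,q_0$, which is affine-linear in the coordinates of $w \in W$ and homogeneous in $c$, I would establish this by producing a single $5\times 5$ minor of $M(w,c)$ whose vanishing on all of $W$ forces $c = 0$. Note that this does \emph{not} say the elements of $W$ themselves have low rank; a generic element of $W$ has rank larger than $4$, and what is being claimed is only that nothing outside $W$ can have rank at most $4$. Granting the estimate, the rank-$\le 4$ locus of $(P_G)_2$ is contained in $W \subsetneq (P_G)_2$ and so cannot span $(P_G)_2$, contradicting the necessary condition above; hence no linear change of basis makes $P_G$ binomial.

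The main obstacle I anticipate is exactly this uniform rank estimate. Excluding a single bad basis is trivial; the force of the theorem is that it must rule out all of $GL_{16}$ simultaneously, and it is the $GL$-invariance of the rank stratification that converts the degree-two calculation into a genuinely basis-free obstruction. The difficulty is that $W$ is a multi-parameter family over which the $4\times 4$ minors of the forms can and do degenerate, so I must isolate the contribution of $q_0$ and show that it keeps the rank at $5$ regardless; organizing the minor computation so as to separate these two effects is where the real work lies. A secondary, milder point is to certify at the outset that $(P_G)_2 \neq W$, that is, to produce the extra quadric $q_0$, which I would extract by an elimination or Gröbner basis computation from the monomial parametrization of Remark \ref{rmk:toric}.
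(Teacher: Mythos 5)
Your plan breaks down at the step you call the ``crucial structural input,'' and it does so for a reason that also invalidates your fallback: for this network there is no extra quadric $q_0$ at all. The ideal $I_{\glb(G)}$ is generated by five $2\times 2$-minors $f_1,\ldots,f_5$ (coming from $2 \indep 1$ and $4 \indep 1 \ | \ \{2,3\}$), and a computation shows this ideal is \emph{prime}; since $P_G$ is a minimal prime of $I_{\glb(G)}$, it follows that $P_G = I_{\glb(G)}$, hence $(P_G)_2 = W$ exactly. So the decomposition $(P_G)_2 = W \oplus \langle q_0 \rangle$ you intend to produce does not exist, and ``adjusting for larger codimension'' does not help, because the codimension is zero.

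Worse, your basis-free necessary condition is genuinely too weak to detect non-toricity here, so no repair within your framework can work. The rank-$\le 4$ elements of $(P_G)_2$ are precisely the scalar multiples of $f_1,\ldots,f_5$ (this is what the $5\times5$-minor computation of the symmetric matrix of $\lambda_1 f_1 + \dots + \lambda_5 f_5$ shows), and these five forms \emph{do} span $(P_G)_2$: each $f_i$ has rank exactly $4$, with $f_2,\ldots,f_5$ honest binomials and $f_1$ a $2\times2$ determinant in four independent linear forms. Thus $(P_G)_2$ is spanned by rank-$\le 4$ forms, your criterion is satisfied, and yet the theorem asserts $P_G$ is not toric. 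The obstruction has to be a \emph{simultaneity} statement, which is what the paper's proof supplies: any binomial minimal generating set must consist, up to scalars, of $f_1,\ldots,f_5$ themselves (by the rank classification just described together with the invariance of degrees in minimal generating sets), and all five would have to be binomials in one common coordinate system. Since each has rank $4$ it involves four distinct variables of that system; five generators would require $20$ variables while only $16$ exist, so some pair $f_i$, $f_j$ shares a variable, forcing $\operatorname{rank}(f_i + f_j) \le 7$. But a direct check shows every combination $f_i + c f_j$ with $c \ne 0$ has rank $8$. This contradiction among \emph{pairs} of generators, not a spanning condition on the rank-$\le 4$ locus, is what rules out all of $GL_{16}$ at once; it is the idea missing from your proposal.
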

\begin{rmk}
The ideal $P_G$ is toric if and only if the quotient ring $\R[\boldsymbol{x}]/P_G$ is isomorphic to a subring $S=\R[m_1, \ldots, m_s]$ of a polynomial ring, where $m_1, \ldots, m_s$ is a minimal generating set of monomials. We know that $\R[\boldsymbol{x}]/P_G$ is indeed isomorphic to a subring $A \subseteq \R[\Theta,z]$ minimally generated by 16 polynomials. Explicit expressions of the generators are given in Example \ref{ex:normal}, and one easily checks that they are linearly independent. Finding a change of variables under which $P_G$ is binomial is equivalent to finding an isomorphism $\psi: S \to A$. Such an isomorphism $\psi$ sends the given generating set of $S$ to a generating set of $A$. It is therefore necessary that $s=16$, and for each $i$ the image $\psi(m_i)$ must be a linear combination of the generators of $A$. Hence it is enough to consider \emph{linear} changes of variables in order to determine whether $P_G$ is toric or not.   
\end{rmk}
The proof of Theorem \ref{thm:non-toric} uses matrix representation of quadratic forms. Every quadratic form $f$ on variables $x_1, \ldots x_N$ can be represented by a symmetric coefficient matrix $S$ by $f=x^\top S x$. Here $x$ denotes the column vector $(x_1, \ldots, x_N)$. 
The \emph{rank of $f$} refers to the rank of $S$, which is invariant under linear change of coordinates. If $f$ is a binomial then the rank is at most four. 

We also recall some basic facts about minimal generating sets of homogeneous ideals of polynomial rings. A generating set is \emph{minimal} if no proper subset generates the same ideal. Minimal generating sets are not unique in general. However, if $f_1, \ldots, f_r$ and $g_1, \ldots, g_s$ are two minimal homogeneous generating sets for the same ideal, then $r=s$ and (after possibly reindexing) $\deg(f_i)=\deg(g_i)$.

\begin{proof}[Proof of Theorem \ref{thm:non-toric}]
The computational software Macaulay2 \cite{M2} will be used in two steps of this proof. 

The global Markov property of $G$ consists of $2 \indep 1$ and $4 \indep 1 \ | \ \{2,3\}$. The ideal $I_{2 \indep 1}$ is generated by the determinant
\[
f_1= \begin{vmatrix}
x_{11++} & x_{21++} \\
x_{12++} & x_{22++}
\end{vmatrix},
\]
and $I_{4 \indep 1 | \{2,3\}}$ is generated by the four determinants
\begin{align*}
&f_2=\begin{vmatrix}
x_{1111} & x_{2111} \\
x_{1112} & x_{2112}
\end{vmatrix}, \quad f_3=\begin{vmatrix}
x_{1121} & x_{2121} \\
x_{1122} & x_{2122}
\end{vmatrix}, \\
 &f_4=\begin{vmatrix}
x_{1211} & x_{2211} \\
x_{1212} & x_{2212}
\end{vmatrix}, \quad f_5=\begin{vmatrix}
x_{1221} & x_{2221} \\
x_{1222} & x_{2222}
\end{vmatrix}.
\end{align*}
The Macaulay2 commands
\begin{verbatim}
I=ideal(f_1, f_2, f_3, f_4, f_5)
trim I
isPrime I
\end{verbatim}
with $f_1, \ldots, f_5$ being the polynomials given above, verifies that this is a minimal generating set for the ideal $I_{\glb(G)}=I_{2 \indep 1} + I_{4 \indep 1 | \{2,3\}}$. Moreover, the last line tells us that $I_{\glb(G)}$ is a prime ideal. As $P_G$ is a minimal prime of $I_{\glb(G)}$ we have $P_G=I_{\glb(G)}$. 
Hence any minimal generating set of $P_G$ consists of five linear combinations $\lambda_1f_1 + \dots + \lambda_5f_5$. Such a quadratic form is represented by the symmetric matrix 
\[
S=\begin{pmatrix}
0 & A \\
A^\top & 0 
\end{pmatrix}
\quad \text{where} \quad A=\left(\begin{smallmatrix}
 0 & \lambda_{2} & 0 & 0 & \lambda_{1} & \lambda_{1} & \lambda_{1} & \lambda_{1} \\
-\lambda_{2}&0&0&0&\lambda_{1}&\lambda_{1}&\lambda_{1}&\lambda_{1}\\
0&0&0&\lambda_{3}&\lambda_{1}&\lambda_{1}&\lambda_{1}&\lambda_{1}\\
0&0&-\lambda_{3}&0&\lambda_{1}&\lambda_{1}&\lambda_{1}&\lambda_{1}\\
-\lambda_{1}&-\lambda_{1}&-\lambda_{1}&-\lambda_{1}&0&\lambda_{4}&0&0\\
-\lambda_{1}&-\lambda_{1}&-\lambda_{1}&-\lambda_{1}&-\lambda_{4}&0&0&0\\
-\lambda_{1}&-\lambda_{1}&-\lambda_{1}&-\lambda_{1}&0&0&0&\lambda_{5}\\
-\lambda_{1}&-\lambda_{1}&-\lambda_{1}&-\lambda_{1}&0&0&-\lambda_{5}&0\\
 \end{smallmatrix}\right)
\]
in the basis $\{ x_{1111}, x_{1112}, \ldots, x_{2222}\}$.
For $S$ to represent a binomial the rank must be at most four, so all ${5 \times 5}$-minors must vanish. Running the commands
\begin{verbatim}
J = radical minors(5,S)
primaryDecomposition J
\end{verbatim}
where $S$ is the matrix given above with entries in a polynomial ring, computes the radical of the ideal  of $5 \times 5$-minors as 
\[
 (\lambda_1 \lambda_2,\lambda_1 \lambda_3,\lambda_1 \lambda_4,\lambda_1 \lambda_5,\lambda_2 \lambda_3, \lambda_2 \lambda_4,\lambda_2 \lambda_5,\lambda_3 \lambda_4,\lambda_3 \lambda_5, \lambda_4 \lambda_5 ) \]
 which has the primary decomposition
 \[
(\lambda_2,\lambda_3,\lambda_4,\lambda_5) \cap (\lambda_1,\lambda_3,\lambda_4,\lambda_5) \cap \dots \cap (\lambda_1,\lambda_2,\lambda_3,\lambda_4).
\]
This shows that $S$ has rank less than five only if all but one $\lambda_i=0$, so the only possible binomials are $f_1, \ldots, f_5$ themselves. Indeed, $f_2, \ldots, f_5$ are binomials, and $f_1$ is a binomial after a change of variables. The next step is to see that there is no change of variables for which they are all binomials. Any quadratic binomial is the determinant of a $2 \times 2$-matrix where the entries are variables. The fact that $f_1, \ldots, f_5$ are all of rank four means that we are looking for $2 \times 2$-matrices with four distinct entries. Then there must be a pair $f_i$, $f_j$ which share a variable. For such a pair, $f_i+f_j$ is a quadratic form in at most seven variables, so it has rank at most seven. It is easily verified that each linear combination $f_i+cf_j$, with $c \ne 0$, in fact has rank eight. We can conclude that there is no linear change of variables which makes all of $f_1, \ldots, f_5$ binomials, and hence there is no basis for which $P_G$ is a binomial ideal.
\end{proof}

\section{Quadratic relations}
For any Bayesian network $G$ we have the ideal containment $I_{\glb(G)} \subseteq P_G$, and the ideal $I_{\glb(G)}$ is generated in degree two by definition. The prime ideal $P_G$ is not always generated in degree two, so the two ideals are not equal in general. It is conjectured in \cite{GSS} that the two ideals agree in degree two. 

\begin{conj}[{\cite[Conjecture 7]{GSS}}]\label{conj:global}
For any Bayesian network $G$, the quadrics of $P_G$ generate the ideal $I_{\glb(G)}$. 
\end{conj}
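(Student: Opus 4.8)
The plan is to prove the conjecture by induction on the number of vertices $n$, peeling off a single sink at each step and exploiting the resulting fiber-product structure of the parameterization. Let $n$ be a sink of $G$ with parent set $C = \pa(n)$, and let $G'$ be the Bayesian network on $\{1, \ldots, n-1\}$ obtained by deleting $n$. Writing each index as $u = (u', k)$ with $u' = (u_1, \ldots, u_{n-1})$ and $k = u_n$, the map \eqref{eq:map_BN} factors as $x_{u'k} \mapsto y_{u'}\,t_{c(u'),k}$, where $y_{u'}$ is the image of $x_{u'}$ under the analogous map for $G'$, the parameters $t_{c,k} = \theta(X_n = k \mid \pa(n) = c)$ are the fresh conditional probabilities of the sink, and $c(u')$ denotes the restriction of $u'$ to the coordinates in $C$. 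Since the only relations binding the two families of labels are $\sum_k t_{c,k} = z$ for each $c$, with $z$ shared, the target ring is the tensor product $A' \otimes_{\R[z]} A_n$, where $A' = \R[\Theta', z]/\langle \boldsymbol{\theta}' - z\rangle$ is attached to $G'$ and $A_n = \R[t,z]/\langle \sum_k t_{c,k} - z \rangle$. Eliminating one $t_{c,\cdot}$ per value $c$ shows $A_n$ is a polynomial ring, hence free over $\R[z]$, so $A' \otimes_{\R[z]} A_n = \bigoplus_\mu A'\cdot\mu$ splits as a direct sum of copies of $A'$ indexed by the monomials $\mu$ in $\{t_{c,k} : k < \kappa_n\}$.

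First I would match the gluing relations with genuine conditional independence. The minors $x_{u'k}x_{w'l} - x_{u'l}x_{w'k}$ over prefixes with $c(u') = c(w')$ map to $y_{u'}y_{w'}(t_{c,k}t_{c,l} - t_{c,l}t_{c,k}) = 0$ and are exactly the generators of $I_{n \indep (\{1,\ldots,n-1\}\setminus C)\,|\,C}$, which lies in $I_{\glb(G)}$. Since $n$ is a sink it is a collider on every trail through it and has no descendants, so any separation valid in $G'$ stays valid in $G$; under the marginalization $x_{u'}^{G'} \leftrightarrow x_{u'+} = \sum_k x_{u'k}$ this places the lift of $I_{\glb(G')}$ inside $I_{\glb(G)}$. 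By the inductive hypothesis the quadrics of $P_{G'}$ generate $I_{\glb(G')}$, so the lifted $G'$-quadrics together with the gluing minors all lie in $I_{\glb(G)}$. As the containment $I_{\glb(G)} \subseteq P_G$ always holds, the conjecture reduces to showing that every degree-two element of $P_G$ is a combination of lifted $G'$-quadrics and gluing minors.

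To attack this last point I would expand a quadric $q \in (P_G)_2$ in the decomposition $\bigoplus_\mu A'\cdot\mu$ and demand that each $A'$-component vanish. The components indexed by monomials $\mu$ on which the elimination $t_{c,\kappa_n} = z - \sum_{k<\kappa_n} t_{c,k}$ causes no mixing pin down, coefficient by coefficient, $A'$-linear relations among the $y_{u'}$'s, which by the inductive hypothesis are forced to come from quadrics of $P_{G'}$; the components arising from a single value $c$, where that elimination blends basis monomials, are precisely where the gluing minors are produced. Carrying this through amounts to a degree-two generation theorem for the fiber-product map. When the induced subgraph on the non-sinks is perfect every peeling step is a genuine toric fiber product, so this mechanism recovers Theorem \ref{thm:global} and confirms that the two candidate families suffice in that regime.

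The hard part will be the interaction between the linear relations $\sum_k t_{c,k} = z$ and the possibly high-degree relations of $P_{G'}$. Because $P_{G'}$ is not assumed to be quadratically generated, a quadric of $P_G$ may become visible only after the substitution $t_{c,\kappa_n} = z - \sum_{k<\kappa_n} t_{c,k}$, typically through a marginalized quadric of the form $\sum_k x_{u'k}\,x_{w'+}$ that straddles the $G'$-model and the gluing; separating these from the two candidate families is the crux. In fiber-product language one wants the product to inherit quadratic generation of its conditional-independence part from the factors, but the available generation theorems assume both factors toric, whereas here $A'$ is non-toric in general. I expect the resolution to require either a direct combinatorial analysis of the degree-two syzygies of the map above, or a transport of the problem to the toric setting on the balanced subtree via the change of variables of Remark \ref{rmk:toric}, followed by a descent of the degree-two analysis back to the original coordinates.
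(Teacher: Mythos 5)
You have set out to prove Conjecture~\ref{conj:global}, but this statement is \emph{open}: neither \cite{GSS} nor this paper proves it, and the paper establishes only the special case where the induced subgraph on the non-sinks is perfect (Theorem~\ref{thm:global}), by a direct analysis of binomials in the basis of Remark~\ref{rmk:toric} together with a case study of d-separation along trails --- not by the sink-peeling fiber-product mechanism you propose. Your text is in any case a research plan rather than a proof: by your own admission the decisive step is unresolved. The parts you do argue are correct --- the target ring splits as $A' \otimes_{\R[z]} A_n$ with $A_n$ free over $\R[z]$, the gluing minors generate $I_{n \indep (\{1,\ldots,n-1\}\setminus \pa(n)) \,|\, \pa(n)} \subseteq I_{\glb(G)}$, and since a sink is a collider with no descendants every separation valid in $G'$ stays valid in $G$, so the marginalized lift of $I_{\glb(G')}$ lands in $I_{\glb(G)}$.

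The genuine gap is that your stated reduction --- ``the conjecture reduces to showing that every degree-two element of $P_G$ is a combination of lifted $G'$-quadrics and gluing minors'' --- is false, not merely hard. Let $G$ have four binary variables and edges $1\to 2$, $3\to 4$ only, and peel the sink $n=4$, so $\pa(4)=\{3\}$. The quadric
\[
q \;=\; x_{1111}x_{2222} - x_{2211}x_{1122}
\]
lies in $P_G$ (it is a generator of $I_{\{3,4\}\indep\{1,2\}}$, and both monomials map to the same product of edge labels), and the conjecture holds for this $G$ by Theorem~\ref{thm:global}. Yet $q$ is not in the ideal generated by your two families: evaluate at the point $x_{u_1u_2u_3k} = (-1)^{k+1}\mu^{(u_3)}_{u_1u_2}$, where $\mu^{(1)}_{11}=\mu^{(2)}_{22}=1$ and all other $\mu^{(c)}_{u_1u_2}=0$. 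Every gluing minor vanishes there (for fixed $u_3=c$ the sign $(-1)^{k+l}$ makes the two terms cancel), and every lifted quadric vanishes since all marginals $x_{u'+}=x_{u'1}+x_{u'2}=0$; hence everything in the ideal they generate vanishes, but $q$ evaluates to $-1$. Your own expansion in $\bigoplus_\mu A'\cdot\mu$ shows what goes wrong: the $A'$-component of $q$ is the $P_{G'}$-quadric $y_{111}y_{222}-y_{221}y_{112}$, but carried by the mixed coefficient $t_{1,1}t_{2,2}$ rather than by $z^2$. Such ``$t$-decorated'' lifts of $P_{G'}$-quadrics form a genuinely third family, and placing \emph{them} in $I_{\glb(G)}$ requires conditional independence statements of $G$ (here $\{3,4\}\indep\{1,2\}$) that do not follow from the inductive hypothesis about $G'$; making the induction close --- i.e., finding the right strengthened hypothesis tracking how $P_{G'}$-quadrics interact with the parent configuration of the sink, exactly where toric-fiber-product theorems need both factors toric --- is precisely the open content of the conjecture, and consequently your claim that the scheme ``recovers'' Theorem~\ref{thm:global} is also unsubstantiated as written.
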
 

We shall now prove the conjecture for the toric Bayesian networks covered by Theorem \ref{thm:toric_BN}.

\begin{thm}\label{thm:global}
Let $G$ be a Bayesian network for which the induced subgraph on all non-sinks is perfect. Then $I_{\glb(G)}$ is generated by all quadrics of $P_G$. 
\end{thm}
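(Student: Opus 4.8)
The plan is to reduce the statement to a statement about degree-two graded pieces and then to a combinatorial connectivity question. Since $I_{\glb(G)}\subseteq P_G$ and $I_{\glb(G)}$ is generated in degree two, the inclusion $I_{\glb(G)}\subseteq(\text{quadrics of }P_G)$ is automatic, and because both ideals are generated in degree two it suffices to prove the equality of graded pieces $(P_G)_2=(I_{\glb(G)})_2$; as $(I_{\glb(G)})_2\subseteq(P_G)_2$ already, the real content is $(P_G)_2\subseteq(I_{\glb(G)})_2$. To get hold of $(P_G)_2$ I would pass, via Theorem \ref{thm:toric_BN} and Remark \ref{rmk:toric}, to the basis $\{x_{u+}\}$ in which $P_G$ is the toric ideal of the explicit monomial map $x_{u+}\mapsto m_{u+}$. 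Since a prime binomial ideal contains no monomials, each graded piece of $P_G$ is spanned by binomials, so $(P_G)_2$ is spanned by the quadratic binomials $x_\alpha x_\beta-x_\gamma x_\delta$ with $m_\alpha m_\beta=m_\gamma m_\delta$, and it is enough to show every such binomial lies in $I_{\glb(G)}$.

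The second step is the standard toric reformulation. Group the degree-two monomials by their image, and for a fixed image $M$ call the set of unordered pairs $\{x_\alpha,x_\beta\}$ with $m_\alpha m_\beta=M$ the \emph{fiber} over $M$. If the fiber is connected by a family of ``move'' binomials, then every quadratic binomial in that fiber is a telescoping $\Z$-combination of moves; crucially every intermediate pair still maps to $M$, so the telescoping stays in degree two and no multiplication by monomials is needed. Thus the theorem reduces to showing that each fiber is connected under the elementary swaps supplied by the global Markov property. To describe a fiber I would first record the \emph{vertex-by-vertex} criterion: because the parameters attached to distinct vertices are distinct variables and only $z$ is shared, $m_\alpha m_\beta=m_\gamma m_\delta$ holds if and only if for every vertex $j$ the multiset of local factors $\{t_j(\alpha),t_j(\beta)\}$ equals $\{t_j(\gamma),t_j(\delta)\}$, where $t_j$ is either the parameter indexed by the value of $X_j$ and its parent configuration, or the symbol $z$ when $j$ is a sink at its maximal value.

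The connectivity itself I would establish in two stages, using the ordering $1<\dots<n$ with the sinks placed last (Remark \ref{rmk:numbering}). First, the sink relations $j\indep(\{1,\dots,j-1\}\setminus\pa(j))\mid\pa(j)$ lie in the global Markov property and their generators swap \emph{only} the coordinate $X_j$ between two indices that agree on $\pa(j)$; I would use these, one sink at a time, to bring the sink coordinates of the two pairs into agreement, the matched-multiset condition at $j$ being exactly what guarantees the required agreement on $\pa(j)$ and absorbs the marginalizing $+$ when a factor is $z$. Once the sink coordinates agree, what remains is a fiber of the parametrization of the induced subgraph $H$ on the non-sinks $\{1,\dots,d\}$, which is perfect, hence decomposable; for decomposable models it is classical that the conditional-independence quadrics generate the whole prime ideal, so this residual non-sink fiber is connected by non-sink moves.

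The main obstacle is this connectivity argument, and within it the bookkeeping forced by the marginalization. One must verify that the generators of $I_{A\indep B\mid C}$, which involve indices marginalized over the positions outside $A\cup B\cup C$, can be matched to the swaps used to connect fibers written in the basis $\{x_{u+}\}$ (whose marginalization is only over sinks at their maximal value); this is the point where the invisibility of a sink's parent configuration behind the symbol $z$ must be exploited, and where one must check that realigning the sink coordinates does not undo agreements already obtained on earlier coordinates. Organizing the swaps in increasing vertex order should control the interaction, and the perfectness hypothesis of Theorem \ref{thm:toric_BN} is exactly what makes the non-sink block fall to the decomposable case, so that the two stages together exhaust every fiber.
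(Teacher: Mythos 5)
Your reduction to quadratic binomials in the toric basis of Remark \ref{rmk:toric}, and the vertex-by-vertex fiber criterion, are correct and agree with the paper's setup (its equation \eqref{eq:proof_global}). The gap is in the connectivity argument, and it is concentrated in stage 1. The claim that ``the matched-multiset condition at $j$ is exactly what guarantees the required agreement on $\pa(j)$'' is false: for a sink $j$ in a \emph{crossed} matching, the fiber condition yields agreements on $\pa(j)$ between indices lying in \emph{different} pairs (if $p_j(\alpha)=p_j(\delta)\neq z$ then $\alpha$ and $\delta$ agree on $\pa(j)$), whereas the swap move you want to apply acts \emph{within} one pair and is a relation of $P_G$ only when the two indices of \emph{that} pair agree on $\pa(j)$. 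Nothing in the fiber condition forces this within-pair agreement, and when it fails no single-sink swap preserves the image. Stage 2's premise has the same defect: after the sink coordinates agree, the residual object is still a fiber of $G$'s parametrization, not of $H$'s, because the sink factors depend on the (non-sink) parent coordinates; a relation of $P_H$ does not lift to $P_G$ by carrying sink coordinates along, so ``non-sink moves'' are not freely available either.

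Here is a concrete fiber on which your move repertoire is empty. Let $G$ have vertices $1,2,3,4$, all binary, with edges $1\to 3$ and $2\to 4$ only; the non-sinks $1,2$ induce an edgeless, hence perfect, graph, so the theorem applies. The binomial $x_{1111}x_{22++}-x_{121+}x_{21+1}$ lies in $P_G$: both monomials map to $\theta_1\theta_1'\theta_2\theta_2'\,\theta(X_3{=}1\,|\,X_1{=}1)\,\theta(X_4{=}1\,|\,X_2{=}1)\,z^2$, and one checks that this fiber consists of exactly these two pairs. No sink swap applies: swapping $X_3$ within either pair requires its two indices to agree on $X_1$, which they do not (likewise for $X_4$ and $X_2$). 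No pure non-sink move applies: swapping $X_1$ alone destroys the factor $\theta(X_3{=}1\,|\,X_1{=}1)$, and swapping $X_2$ alone destroys the $X_4$-factor. The unique connecting move swaps $X_1$ and $X_3$ \emph{simultaneously} and comes from the statement $\{1,3\}\indep\{2,4\}$, which couples a sink with its non-sink parent; your two stages (single-sink swaps, then $H$-moves) never produce such a coupled move, so the telescoping cannot even start. This is exactly the difficulty the paper's proof is organized around: it does not decompose into elementary moves at all, but reads off from the given binomial the sets $A$ (straight mismatches), $B$ (crossed mismatches) and $C$ (agreeing coordinates) --- each of which may mix sinks and non-sinks --- and then proves the single tailored statement $A\indep B\mid C$ by a trail-separation induction, which is where the perfectness of the non-sink subgraph actually gets used.
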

\begin{proof}
By Theorem \ref{thm:toric_BN} the ideal $P_G$ is toric in this case, and we use the basis for $\R[\boldsymbol{x}]$ given in Remark \ref{rmk:toric}. The idea is to prove that any quadratic binomial in $P_G$ belongs to $I_{\glb(G)}$. So, take $x_ux_{u'}-x_vx_{v'}$ where $u,u',v$ and $v'$ are $n$-vectors with entries in $\Z \cup \{+\}$ as described in Remark \ref{rmk:toric}. In particular, the $i$-th entry can only be a + if $i$ is a sink in $G$. 

The image of $x_u$ in $\R[\Theta,z]/\langle \boldsymbol{\theta} -z \rangle$ is $p=p_1 \cdots p_n$ where
\[
p_j= \begin{cases}
z & \text{if} \ u_j=+ \\
\theta(X_j=u_j \ | \ X_i=u_i \ \text{for} \ i\in \pa(j)) & \text{if} \ u_j \ne +.
\end{cases}
\]
In the same way we write the images of $x_{u'}, x_v$ and $x_{v'}$ as monomials $p'=p_1' \cdots p_n'$, $q=q_1 \cdots q_n$, and $q'=q_1' \cdots q_n'$. The equality $pp'=qq'$ implies
\begin{equation}\label{eq:proof_global}
\begin{cases}
p_{i}=q_{i} \\
p_{i}'=q_{i}'
\end{cases}
\quad \text{or} \quad 
\begin{cases}
p_{i}=q_{i}' \\
p_{i}'=q_{i}
\end{cases}
\quad \text{for each } i=1, \ldots, n.
\end{equation}
For $x_ux_{u'}-x_vx_{v'}$ to be of the form \eqref{eq:indep_eq} we can not have $u_j\ne +$ while $u_j' = +$. So suppose we are in this situation, and say $p_j=q_j$ and $p_j'=q_j'$. As $u_j \ne +$ we have $z \ne p_j=q_j$ and it follows that $u_i=v_i$ for each $i \in \pa(j)$. By \eqref{eq:proof_global} we then also have $u_i'=v_i'$  for each $i \in \pa(j)$. This allows us to replace $u_j'=+$ and $v_j'=+$ by integers $1, \ldots, \kappa_j$ and in this way produce valid relations in $P_G$. As $j$ is not a parent, only $p_j'$ and $q_j'$ are affected by this operation. Let $f_1, \ldots, f_{\kappa_j}$ denote the quadratic forms obtained from $x_ux_{u'}-x_vx_{v'}$ by changing the $j$-th entries of $u'$ and $v'$ to integers. Then 
\[
x_ux_{u'}-x_vx_{v'}=f_1 + \dots + f_{\kappa_j}.
\]  
By this argument we can now restrict to the case where for each $j$ either all of $u_j, u'_j, v_j$, and $v_j'$ are integers or all +.

Now let $A, B, C$ be subsets of $\{1, \ldots, n\}$ defined by 
\begin{align*}
C&=\{ i \ | \ u_i=u_i'=v_i=v_i' \ne +\} \\
A&=\{i \ | \ p_i=q_i \ne p_i'=q_i' \} \setminus C \\
B&=\{i \ | \ p_i=q_i' \ne p_i'=q_i \} \setminus C.
\end{align*}
Note that if $i \notin A \cup B \cup C$ then $p_i=p_i'=q_i=q_i'=z$ and in particular $i$ is a sink. With these sets $A, B, C$ our binomial relation $x_ux_{u'}-x_vx_{v'}$ is of the form \eqref{eq:indep_eq}, but we need to verify that the statement $A \indep B \ | \ C$ is true. We do this by the graph theoretical interpretation. So, assume we have a trail $\pi$ connecting vertices $k \in A$ and $\ell \in B$. We want to prove that there is a vertex $j$ on $\pi$ satisfying one of the conditions \emph{S1} or \emph{S2} given in Section \ref{subsec:ci-rel}. We may assume that the vertex next to $k$ on $\pi$ is not in $A$, because if this is the case we can consider the shorter subtrail instead. Let's first consider three special cases. 

\begin{enumerate}
\item \underline{$k \to \ell$ or $k \leftarrow \ell$ is an edge.}
Say $k \leftarrow \ell$. As $k \in A$ we have $p_k=q_k$. This requires $u_i=v_i$ for each $i \in \pa(k)$, so in particular $u_\ell = v_\ell$. As $\ell \in B$ we have $p_{\ell}=q_{\ell}'$ and $p_{\ell}'=q_{\ell}$. Altogether we then have $u_\ell = v_\ell = u'_\ell = v'_\ell$, so $\ell \in C$, contradicting $\ell \in B$. Similarly we get a contradiction if $k \to \ell$ is an edge. We can conclude that this situation does not occur. 
\item \underline{$\pi = k \to j \leftarrow \ell$ for some $j$.} As $k \in A$ we have $p_k=q_k \ne z$ and $p'_k=q_k' \ne z$. In particular $u_k=v_k$ and $u'_k=v_k'$. Then $u_k \ne v_k'$ as $k \notin C$. In the same way $ \ell \in B$ implies $u_\ell \ne v_\ell$. By \eqref{eq:proof_global} we have $p_j=q_j$ or $p_j=q_j'$. If $p_j \ne z$ then $p_j=q_j$ would imply $u_\ell=v_\ell$ and $p_j=q_j'$ would imply $u_k=v_k'$ as $k,\ell \in \pa(j)$. So $p_j=z$, which then implies $u_j=+$ and $j \notin C$. We have proved that $j$ satisfies condition \emph{S2}.
\item \underline{$\pi = k \leftarrow j \cdots$.} Since $j$ is not a sink $j \in A \cup B \cup C$. By assumption $j \notin A$, and we cannot have $j \in B$ by the same argument as in 1 above. So we can conclude $j \in C$, and therefore satisfies condition {\it S1}.
\end{enumerate}

Next, we assume $\pi = k \to j \cdots$ and argue by induction over the length of $\pi$. As we saw in the first special case $j \notin B$, and $j \notin A$ by assumption. If $j \notin C$ then $j$ is a sink, so $\pi = k \to j \leftarrow \cdots$ and condition {\it S2} is satisfied.
 Assume $j \in C$. Then {\it S1} is satisfied, unless $\pi = k \to j \leftarrow j' \cdots$. Suppose, in this case, that $j$ is a sink. As $j'$ is not a sink $j' \in A \cup B \cup C$. If $j' \in A$ we can reduce to a shorter trail, and we are done by induction. We cannot have $j' \in B$ as this would imply $j \notin C$ by 2. Hence we are left with $j' \in C$, so $j'$ satisfies {\it S1}. Finally, suppose $j$ is not a sink. Then there is an edge $k \to j'$ or $k \leftarrow j'$, as otherwise the induced subgraph on the non-sinks of $G$ would not be perfect. The case $k \leftarrow j'$ is already covered by 3 above. Assume we have the edge $k \to j'$. By induction there is a $j''$ on the shorter trail $k \to j' \cdots \ell$ satisfying {\it S1} or {\it S2}. Then $j''$ satisfies these conditions also considered as a vertex on $\pi$, except in the case of {\it S2} and $j''=j'$. But this cannot happen as $j'$ has a descendant in $C$, namely $j$. We have now proved that the statement $A \indep B \ | \ C$ holds.  
\end{proof}

It is well known that the ideal $P_G$ of a Bayesian network is quadratic if $G$ is perfect. It is proved in \cite{Lauritzen} that models given by perfect DAGs are the same as decomposable undirected graphical models. Those in turn have ideals with quadratic Gröbner basis, \cite{Hosten-Sullivant}. Alternatively, proofs for the more general result that prime ideals of balanced staged trees have quadratic Gröbner bases can be found in \cite{Ananiadi-Duarte} and \cite{GMN}. Together with Theorem \ref{thm:global} we recover Corollary \ref{cor:perfect-global}, which was originally proved in \cite{Takken} and \cite{Dobra}. See also the discussion after Theorem 4.3 in \cite{GMS}.

\begin{cor}[\cite{Dobra}, \cite{Takken}]\label{cor:perfect-global}
Let $G$ Bayesian network given by a perfect DAG. Then $I_{\glb(G)}=P_G$. 
\end{cor}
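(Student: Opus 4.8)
The plan is to combine Theorem \ref{thm:global} with the known fact that $P_G$ is quadratically generated when $G$ is perfect. The whole statement then reduces to two observations that fit together immediately.

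First I would check that a perfect DAG falls under the hypothesis of Theorem \ref{thm:global}, that is, that the induced subgraph on the non-sinks of $G$ is itself perfect. The key point is purely graph-theoretic: a sink has no outgoing edges, so a sink can never appear as a parent of any vertex. Hence for every non-sink $i$ the set $\pa(i)$ consists entirely of non-sinks, and by perfection of $G$ these parents induce a complete graph. Deleting the sinks therefore removes no edge among any $\pa(i)$, so the induced subgraph on the non-sinks is again perfect. Consequently Theorem \ref{thm:global} applies and tells us that $I_{\glb(G)}$ is generated precisely by all the quadrics of $P_G$.

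It then remains to show that these quadrics already generate all of $P_G$, i.e.\ that $P_G$ is generated in degree two. I would obtain this from the cited results rather than reprove it: by Theorem \ref{thm:perfect} (through \cite{Duarte-Solus}) a DAG is perfect exactly when its staged tree $T_G$ is balanced, and by \cite{Ananiadi-Duarte} and \cite{GMN} the prime ideal of a balanced staged tree has a quadratic Gröbner basis, so in particular is minimally generated in degree two. Alternatively one can invoke the identification of perfect DAG models with decomposable undirected graphical models \cite{Lauritzen}, whose ideals have quadratic Gröbner bases by \cite{Hosten-Sullivant}.

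Putting the two pieces together, the ideal generated by the quadrics of $P_G$ equals $P_G$, while by the previous step it also equals $I_{\glb(G)}$; hence $I_{\glb(G)} = P_G$. The only substantive ingredient is the quadratic generation of $P_G$, which I expect would be the main obstacle if it had to be established from scratch, but which is already available in the literature; the reduction of the graph hypothesis to Theorem \ref{thm:global} is elementary.
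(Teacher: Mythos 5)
Your proposal is correct and matches the paper's own argument: the paper likewise derives the corollary by combining Theorem \ref{thm:global} with the known quadratic generation of $P_G$ for perfect DAGs (via decomposable models and \cite{Hosten-Sullivant}, or balanced staged trees and \cite{Ananiadi-Duarte}, \cite{GMN}). Your explicit verification that a perfect DAG satisfies the hypothesis of Theorem \ref{thm:global} (parents are never sinks, so deleting sinks preserves perfection) is a detail the paper leaves implicit, but the route is the same.
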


This leads to the question for which of the toric Bayesian networks in Theorem \ref{thm:toric_BN} we have $I_{\glb(G)}=P_G$. That is, when is $P_G$ quadratic? 
Adapting the terminology from \cite{GSS} we say that a DAG has an \emph{induced cycle} if there is an induced subgraph consisting of two directed paths sharing the same start and end points but are otherwise disjoint.  Among all Bayesian networks on four binary random variables that satisfy the hypothesis of Theorem \ref{thm:toric_BN}, there are two for which $I_{\glb(G)} \ne P_G$. These are networks 15 and 17 in \cite[Table 1]{GSS}, and those are precisely the two DAGs on four vertices with an induced cycle of length more than three.  In Theorem \ref{thm:quadratic} we see that such networks will always have relations of degree greater than two. 

\begin{thm}\label{thm:quadratic}
Let $G$ be a Bayesian network for which the induced subgraph on all non-sinks is perfect. If $P_G$ is quadratic then $G$ contains no induced cycle of length more than three.  
\end{thm}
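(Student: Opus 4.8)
The plan is to prove the contrapositive: if $G$ has an induced cycle of length more than three, then $P_G$ is not quadratic. The first step is to reformulate ``quadratic'' using Theorem \ref{thm:global}. The hypothesis on $G$ guarantees via Theorem \ref{thm:toric_BN} that $P_G$ is toric, and Theorem \ref{thm:global} tells us the quadrics of $P_G$ generate $I_{\glb(G)}$. Hence $P_G$ is quadratic precisely when $P_G = I_{\glb(G)}$: if $P_G$ is generated by its quadrics then $P_G = \langle (P_G)_2 \rangle = I_{\glb(G)}$, and conversely $I_{\glb(G)}$ is generated in degree two by definition. So it suffices to exhibit a minimal generator of $P_G$ of degree greater than two, equivalently an element of $P_G$ not generated by the quadrics.

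Next I would isolate the relevant subconfiguration. Let the induced cycle consist of two internally disjoint directed paths from a start vertex $s$ to an end vertex $t$. Since the cycle is chordless, the two vertices feeding into $t$ are non-adjacent; were $t$ a non-sink, its parents would have to span a complete graph, contradicting perfectness of the non-sink subgraph. Thus $t$ is a sink. Let $H$ be the induced Bayesian sub-network on the cycle vertices, with every variable taken binary; each non-sink of $H$ has a single parent, so the non-sink subgraph of $H$ is perfect and $P_H$ is toric. The key reduction is that $P_H$ is a \emph{face ideal} of $P_G$: restricting each cycle variable to two chosen values and each remaining variable to a single value picks out a set of states closed under the monomial equivalence defining $P_G$ (in a binomial of $P_G$ the value of each $X_j$ is recovered from its edge label, so if one term is supported on this sub-box the other is too). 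Such a sub-box is a face, and a minimal binomial generating set of a toric ideal restricts, upon killing the off-face variables, to a generating set of the face ideal. Consequently the top degree of a minimal generator of $P_H$ is at most that of $P_G$; in particular, if $P_G$ were quadratic so would be $P_H$. The only bookkeeping here is to first marginalize the off-cycle descendants of the cycle (peeling sinks from the bottom, where marginalization is clean) and to fix the remaining off-cycle vertices, checking that this does not alter the induced DAG $H$; this is routine since the cycle is induced.

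It then remains to prove that a pure directed cycle $H$ of length more than three has $P_H$ non-quadratic. Applying the first paragraph to $H$, I would produce an explicit binomial of degree greater than two lying in $P_H \setminus I_{\glb(H)}$. The cleanest source is a further face of $P_H$ obtained by fixing the sink $t$ to a single value: this merges the two edges at $t$ into one virtual edge joining its two parents and identifies the face ideal with the toric ideal of the hierarchical model of the $(\ell-1)$-cycle on binary states; for $\ell = 4$ this is exactly the $2\times 2\times 2$ no-three-way-interaction model, whose toric ideal is principal and generated by the quartic coming from the alternating $(-1)^{u_1+u_2+u_3}$ move. For general $\ell$ one writes down the analogous primitive ``parity'' binomial around the cycle. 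Showing that this binomial is genuinely not generated by the quadrics, i.e. not in $I_{\glb(H)}$, is the heart of the matter.

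I expect this last verification to be the main obstacle. The degree of the primitive generator grows with the cycle length, and a chordless cycle contains no shorter induced cycle, so a long induced cycle cannot be reduced to the diamond; a uniform argument is needed. The natural tool is again the Graver basis restricting to faces, used to show that the primitive binomial is not a combination of the quadratic binomials supported on the face — a statement one can check by a rank computation on the symmetric coefficient matrices in the spirit of the proof of Theorem \ref{thm:non-toric}, or by a direct Markov-basis analysis of the cycle hierarchical model. A secondary point to handle with care is that the degree bound must transfer in the correct direction (non-quadraticity of the face ideal forcing non-quadraticity of $P_G$), which is exactly why one works with faces rather than general elimination ideals, whose minimal generator degrees can increase.
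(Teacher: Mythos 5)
Your overall strategy parallels the paper's: argue the contrapositive, note that perfectness of the non-sink subgraph forces the endpoint of the induced cycle to be a sink, use Theorem \ref{thm:global} to identify the quadrics of $P_G$ with $I_{\glb(G)}$, and exhibit a high-degree ``parity'' binomial that the quadrics cannot reach. But as written the proposal has two genuine gaps, the second of which is fatal.

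First, the reduction to the pure induced cycle $H$ is not the routine step you claim. The paper deliberately avoids it: Lemma \ref{lem:degree_subgraph} only peels sinks, which reduces $G$ to the ancestral closure of the cycle's endpoint, and the remaining off-cycle vertices are then \emph{not} removed but absorbed into the sets $A$, $B$, $C$ covering all of $\{1,\ldots,n-1\}$. In your reduction, an off-cycle vertex $j$ surviving the peeling can have cycle vertices as parents (e.g.\ the DAG with edges $1\to2$, $1\to3$, $2\to4$, $3\to4$, $2\to5$, $5\to4$ satisfies the hypothesis, has an induced $4$-cycle on $\{1,2,3,4\}$, and $5$ cannot be peeled). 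Fixing $X_j$ to a value then inserts the extra factor $\theta(X_j=c_j \mid X_i=v_i \text{ for } i \in \pa(j))$, which varies with the cycle coordinates, into every monomial of the restricted parameterization. Consequently your ``face'' ideal is a priori only \emph{contained} in $P_H$, and the degree bound then transfers to the face ideal, not to $P_H$; non-quadraticity of $P_H$ alone yields no contradiction. This can probably be repaired (perfectness plus inducedness force $\pa(j)\cap V(H)$ to be a single vertex or a single cycle edge, and multisets of values along a cycle edge are preserved by every binomial of $P_H$, so the extra factors are harmless; one must also run the whole argument in the coordinates $x_{u+}$ of Remark \ref{rmk:toric}, since $P_G$ is not binomial in the raw coordinates), but none of this appears in your write-up, and ``routine since the cycle is induced'' does not cover it.

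Second, and decisively: the step you yourself call ``the heart of the matter'' --- proving that the parity binomial is not generated by the quadrics, i.e.\ does not lie in $I_{\glb(H)}$ --- is never carried out. This is precisely the bulk of the paper's proof: after constructing the degree-four binomial $f$ in \eqref{eg:deg4_binom}, the paper shows by a case analysis that any conditional independence statement $D \indep E \mid F$ whose $2\times 2$-minors could have a term dividing a term of $f$ forces $D=\emptyset$ or $E=\emptyset$, hence $I_{D \indep E\mid F}=\langle 0 \rangle$. For $\ell=4$ you could instead cite that the toric ideal of the binary no-three-way-interaction model is principal and generated in degree four, but for longer cycles you only gesture at ``a rank computation'' or ``a Markov-basis analysis'' and explicitly concede that a uniform argument is still needed. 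As it stands, the proposal is a plan whose hardest step is missing, so it does not constitute a proof of the theorem.
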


In the proof of Theorem \ref{thm:quadratic} we need the following lemma. 

\begin{lem}\label{lem:degree_subgraph}
Let $G$ be a Bayesian network on vertices $1, \ldots, n$, and let $G'$ be the induced subgraph on $1, \ldots, n-1$. If a minimal generating set for $P_{G'}$ has an element of degree $d>2$, then so does any minimal generating set for $P_G$. 
\end{lem}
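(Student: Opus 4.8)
The plan is to exploit that vertex $n$ is necessarily a sink: we may assume the vertices are numbered so that every edge runs from a smaller to a larger label (this is harmless by Remark \ref{rmk:numbering}), and then $n$, being maximal, is a sink of $G$, so that passing from $G$ to $G'$ is nothing but marginalising over the last variable $X_n$. I would build two graded ring homomorphisms between $\R[\boldsymbol{x}]$ (the ring for $G$, with variables $x_u$ indexed by $n$-vectors) and $\R[\boldsymbol{x}']$ (the ring for $G'$, with variables $x_{u'}$ indexed by $(n-1)$-vectors) that are compatible with $P_G$ and $P_{G'}$, and then use them to transport a minimal generator of degree $d$ from $P_{G'}$ to $P_G$.

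First, define $\iota\colon\R[\boldsymbol{x}']\to\R[\boldsymbol{x}]$ by $\iota(x_{u'})=x_{u'+}=\sum_{u_n}x_{(u',u_n)}$; this is an injective, degree-preserving ring homomorphism. Using the affine parametrisations $\varphi_G,\varphi_{G'}$ and the relation $\sum_{u_n}\theta(X_n=u_n\mid\cdots)=1$ one checks $\varphi_G\circ\iota=\varphi_{G'}$, so $\iota(\ker\varphi_{G'})\subseteq\ker\varphi_G$. In the opposite direction, let $\beta\colon\R[\Theta]/\langle\boldsymbol{\theta}-1\rangle\to\R[\Theta']/\langle\boldsymbol{\theta}'-1\rangle$ be the ring homomorphism fixing the edge labels of $G'$ and sending every label $\theta(X_n=u_n\mid\cdots)$ of $X_n$ to the constant $1/\kappa_n$; this respects the defining relations because these $\kappa_n$ constants sum to $1$. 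Then $\pi(x_{(u',u_n)})=\tfrac{1}{\kappa_n}x_{u'}$ defines a degree-preserving retraction $\pi\colon\R[\boldsymbol{x}]\to\R[\boldsymbol{x}']$ with $\pi\circ\iota=\mathrm{id}$ and $\varphi_{G'}\circ\pi=\beta\circ\varphi_G$, whence $\pi(\ker\varphi_G)\subseteq\ker\varphi_{G'}$.

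The crux is to upgrade these containments of the (inhomogeneous) kernels to the homogeneous ideals $P_G,P_{G'}$. For this I would prove the general fact that, for any such network $H$, every homogeneous element of $\ker\varphi_H$ already lies in $P_H$: if $g$ is homogeneous then $\bar\varphi_H(g)$ is homogeneous and vanishes after setting $z=1$, and since $\R[\Theta,z]/\langle\boldsymbol{\theta}-z\rangle$ is a domain the dehomogenisation $z\mapsto 1$ is injective on homogeneous elements, forcing $\bar\varphi_H(g)=0$, i.e.\ $g\in P_H$. As $\iota,\pi$ are graded and $P_{G'},P_G$ are homogeneous, this fact turns the above into $\iota(P_{G'})\subseteq P_G$ and $\pi(P_G)\subseteq P_{G'}$. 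I expect this to be the main obstacle: a homogeneous analogue of $\beta$ simply does not exist, because the parametrisations of $G$ and $G'$ place their images in internal degrees $n$ and $n-1$ respectively, so no graded map relates $\bar\varphi_G$ and $\bar\varphi_{G'}$ directly; it is precisely the detour through the affine maps, corrected by the domain property, that makes the comparison possible.

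Finally, the transfer of generators. Let $a'\in(P_{G'})_d$ be a minimal generator, so its class in $(P_{G'})_d/\bigl(\R[\boldsymbol{x}']_1\cdot(P_{G'})_{d-1}\bigr)$ is nonzero. Then $\iota(a')\in(P_G)_d$, and if $\iota(a')=\sum_k\ell_k g_k$ with $\ell_k$ linear and $g_k\in(P_G)_{d-1}$, applying $\pi$ gives $a'=\sum_k\pi(\ell_k)\pi(g_k)\in\R[\boldsymbol{x}']_1\cdot(P_{G'})_{d-1}$, a contradiction. Hence $\iota(a')$ has nonzero class in $(P_G)_d/\bigl(\R[\boldsymbol{x}]_1\cdot(P_G)_{d-1}\bigr)$, so every minimal generating set of $P_G$ contains an element of degree $d$, since the degrees of minimal homogeneous generators are an invariant of the ideal.
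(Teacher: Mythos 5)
Your proposal is correct and is essentially the paper's own proof: your $\iota$ is the paper's embedding $\epsilon$, your $\pi$ is the paper's projection $\psi$ (with the uniform distribution $1/\kappa_n$ playing the role of the paper's arbitrary $\rho$ satisfying the sum-to-one conditions), and the concluding transfer of a degree-$d$ minimal generator via $\pi\circ\iota=\mathrm{id}$ is word-for-word the paper's contradiction argument. The only divergence is bookkeeping: the paper works directly with the homogenized maps into $\R[\Theta,z]/\langle \boldsymbol{\theta}-z\rangle$, while you detour through the affine parametrisations plus a dehomogenisation lemma; your side remark that no graded analogue of $\beta$ can exist is not quite accurate, since sending each label $\theta(X_n=u_n \mid \cdots)$ to $\rho(\theta(X_n=u_n \mid \cdots))\,z$ defines a graded map relating $\bar\varphi_G$ and $\bar\varphi_{G'}$ up to powers of $z$ (harmless, as the target is a domain), which is how the paper's containments $\epsilon(P_{G'})\subseteq P_G$ and $\psi(P_G)\subseteq P_{G'}$ are justified.
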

\begin{proof}
Let $R$ denote the polynomial ring over $\R$ on variables $x_{v_1 \ldots v_n}$ with $1 \le v_i \le \kappa_i$ for $i=1, \ldots, n$. Similarly, let $R'$ denote the polynomial ring on variables $x_{v_1 \ldots v_{n-1}}$ with $1 \le v_i \le \kappa_i$ for $i=1, \ldots, n-1$. Then we have the two maps
\[ \varphi: R \to \R[\Theta,z]/\langle \boldsymbol{\theta} -z \rangle \quad \text{and} \quad \varphi': R' \to \R[\Theta,z]/\langle \boldsymbol{\theta} -z \rangle\]
so that $P_G=\ker \varphi$ and $P_{G'}=\ker \varphi '$. Define an embedding 
\begin{align*}
\epsilon: &R' \hookrightarrow R \\
&x_{v_1 \ldots v_n} \mapsto x_{v_1 \ldots v_{n-1}+}.
\end{align*}
Then $\epsilon(P_{G'}) \subseteq P_G$. Let $\rho: \Theta \to \R$ be a map assigning values to the edge labels respecting the sum-to-one conditions as described in \eqref{eq:rho}. We define a projection 
\begin{align*}
\psi: &R \to R' \\
& x_{v_1 \ldots, v_n} \mapsto x_{v_1 \ldots v_{n-1}} \cdot \rho(\theta(X_n=v_n \ | \ X_i=v_i \ \text{for} \ i \in \pa(n))).
\end{align*}
Then $\psi(P_G) \subseteq P_{G'}$ and $\psi(\epsilon(x_{v_1 \ldots, v_{n-1}}))=x_{v_1 \ldots, v_{n-1}}$. Now, assume $P_{G'}$ has a homogeneous minimal generator $f$ of degree $d >2$. Then $\epsilon(f)$ is an element of degree $d$ in $P_G$. If $P_G$ would not have minimal generators of degree $d$, then $\epsilon(f)=h_1g_1 + \dots +h_tg_t$ for some $g_1, \ldots, g_t\in P_G$ of degrees all less than $d$. But then
\[
f=\psi(\epsilon(f)) = \psi(h_1g_1 + \dots +h_tg_t) = \psi(h_1)\psi(g_1) + \dots +\psi(h_t)\psi(g_t)
\]
would contradict $f$ being a minimal generator. Hence $P_G$ also has minimal generators of degree $d$.
\end{proof}

\begin{proof}[Proof of Theorem \ref{thm:quadratic}]
Let $G$ be a Bayesian network such that the induced subgraph on the non-sinks is perfect. In addition, assume that $G$ has a induced cycle of length more than three. We shall prove that there is a relation of degree four in $P_G$ which cannot be reduced by the quadrics of $P_G$. 

Let $\pi_1$ and $\pi_2$ be the two directed paths who constitute the induced cycle of length more than three. 
The common endpoint of $\pi_1$ and $\pi_2$ must be a sink, otherwise the induced subgraph on the non-sinks of $G$ cannot be perfect. By Remark \ref{rmk:numbering} we may number the vertices of $G$ in a suitable way, as long as the numbering respects the direction of the edges. In particular we can give the sinks consecutive numbers ending with $n$. Moreover, we can then switch the numbers of two sinks without violating the direction of the edges. If there is more than one sink we order them so that the endpoint of $\pi_1$ and $\pi_2$ is not $n$. Then we can apply Lemma \ref{lem:degree_subgraph} to remove $n$. By repeating this argument we can reduce to the case where the endpoint of $\pi_1$ and $\pi_2$ is $n$, which is the only sink. This implies that $G$ is connected, and the induced subgraph on $\{1, \ldots, n-1\}$ is perfect. 

Let $k_1$ and $k_2$ denote the parents of $n$ on $\pi_1$ and $\pi_2$. Our next step is to define disjoint vertex sets $A$, $B$, $C$ such that  
\begin{itemize}
\item $A \indep B \ | \ C$ ,
\item $A \cup B \cup C = \{1, \ldots, n-1\}$,
\item $k_1 \in A$, and the induced subgraph on $A$ is connected,
\item $k_2 \in B$, and the induced subgraph on $B$ is connected,
\item every vertex in $C$ has a child or parent in $A$, and a child or parent in $B$. 
\end{itemize}
We construct $A$, $B$, and $C$ through three steps. To start, let $A=\{k_1\}$ and $B=\{k_2\}$, and let $C$ be the set of all vertices that lies on a trail connecting $k_1$ and $k_2$, excluding $k_1$, $k_2$ and $n$. Now we claim that $A \indep B \ | \ C$. Indeed, the only way our choice of $C$ would violate the conditions {\it S1} and {\it S2} for separating $A$ and $B$ is if there would be a vertex $i$ with $k_1 \rightarrow i \leftarrow k_2$. But as the induced subgraph on $\{1, \ldots, n-1\}$ is perfect, and there is no edge between $k_1$ and $k_2$ this cannot happen. 
Next, we would like to extend $A$, $B$, and $C$ so that $A \cup B \cup C = \{1, \ldots, n-1\}$. Consider the induced subgraph on $\{1, \ldots, n-1\} \setminus C$. We extend $A$ to be all vertices in the connected component of $k_1$, and $B$ to all vertices in the connected component of $k_2$. Then we add all remaining vertices in $\{1, \ldots, n-1\}$ to  $C$. There are no new trails connecting $A$ and $B$ to consider, as such a trail would also connect $k_1$ and $k_2$. Hence the statement $A \indep B \ | \ C$ is still valid.  Last, say there is a vertex $i \in C$ with a child or parent $k \in A$. If $i$ has no child or parent in $B$ we would like to remove $i$ from $C$ and add it to $A$. If there is no trail connecting $i$ and $B$ it is clear that this can be done. Suppose there is a trail $\pi$ connecting $i$ and $B$. We can extend this trail to also include $k \in A$. Hence there is a vertex on $\pi$ satisfying {\it S1} or {\it S2}. Let $j$ be the vertex next to $i$ on the trail $\pi$. Is $j=n$ then $j$ satisfies {\it S2}, and we can safely move $i$ from $C$ to $A$. Assume $j \neq n$, and recall that $j \notin B$ as it is a child or parent of $i$. So we must have $j \in A \cup C$. If $j \in A$ there must be another vertex on $\pi$, between $j$ and the endpoint in $B$ satisfying {\it S1} or {\it S2}, and we can add $i$ to $A$. Say instead $j \in C$. Then $j$ satisfies {\it S1}, unless we have $i \to j \leftarrow \ell$ in $\pi$. Since we are considering a perfect DAG we would then have an edge between $i$ and $\ell$, and we can repeat the above argument with $\pi$ replaced by a shorter trail. We conclude that the statement $A \indep B \ | \ C$ is still valid after removing $i$ from $C$ and adding it to $A$.  In the same way we can move a vertex from $C$ to $B$. We continue doing this until every vertex that remains in $C$ has a parent or child in $A$, and a parent or child in $B$. Now all five conditions are satisfied. 

Next let's assign values to the random variables of the vertices in $A, B, C$, and encode those values in vectors $a,b,c$. In addition, we make a different assignment $a',b',c'$. We do this so that $a$ and $a'$ differ in every entry, and the same for $b$ and $b'$. We choose $c$ and $c'$ so that they have the same entry in $i$ if and only if $i \in \pa(n)$. Note that $C$ contains a vertex from $\pi_1$ or $\pi_2$, which is not a parent of $n$. Hence $c$ and $c'$ differ in at least one entry. This gives us two quadratic binomials 
\[
x_{abc}x_{a'b'c}-x_{a'bc}x_{ab'c}, \quad x_{abc'}x_{a'b'c'}-x_{a'bc'}x_{ab'c'} \  \in P_G.
\] 
Here $abc$ denotes the $n$-vector with entries determined by $a, b$, and $c$, and analogously for $a'b'c$, $a'bc$, and so on, as in \eqref{eq:indep_eq}. As $A \cup B \cup C = \{1, \ldots, n-1 \}$ the first $n-1$ entries are integers, and the last entry is +. Let $abc1$ denote the vector $abc$ with the last entry replaced by $1$, and consider the binomial 
\begin{equation}\label{eg:deg4_binom}
f=x_{abc1}x_{a'b'c1}x_{a'bc'1}x_{ab'c'1}-x_{a'bc1}x_{ab'c1}x_{abc'1}x_{a'b'c'1}.
\end{equation}
To prove that $f \in P_G$ first note that for the edge labels in the staged tree representation of $G$, we have
\[
 \theta(X_n=1 \ | \ X_i=abc_i \ \text{for} \ i\in \pa(n))= \theta(X_n=1 \ | \ X_i=abc'_i \ \text{for} \ i\in \pa(n))
\]
as $c$ and $c'$ agree on the parents of $n$. Let $\theta_{ab}$ denote this edge label. We define $\theta_{a'b}$, $\theta_{ab'}$, and $\theta_{a'b'}$ analogously. Under the map $\varphi: \R[\boldsymbol{x}] \to \R[\Theta,z]/\langle \boldsymbol{\theta} -z \rangle$ with $P_G=\ker \varphi$ we have 
\begin{align*}
&\varphi(f) =\theta_{ab}\theta_{a'b}\theta_{ab'}\theta_{a'b'}\frac{\varphi(x_{abc}x_{a'b'c}x_{a'bc'}x_{ab'c'}-x_{a'bc}x_{ab'c}x_{abc'}x_{a'b'c'})}{z^4}=0
\end{align*}
so $f \in P_G$. 

Now we shall prove that $f$ cannot be reduced by a binomial of degree two in $P_G$. As we have reduced to the case where $n$ is the only sink, and the induced subgraph on the non-sinks is perfect, $P_G$ is a binomial ideal when using the $x_u$'s with $ u=(u_1, \ldots, u_n)$ where $1 \le u_i \le \kappa_i$ for $1 \le i < n$ and $u_n=1, \ldots, \kappa_n-1$, or $+$ as basis for the polynomial ring. By Theorem \ref{thm:global} the quadrics of $P_G$ belongs to $I_{\glb(G)}$, so we ask whether $f$ can be reduced by binomials of the form \eqref{eq:indep_eq} in the $x_u$'s just described.
 If this is the case then there is a binomial $g \in I_{\glb(G)}$ such that one of the terms of $g$ divides one of the terms of $f$. This gives us 12 possible terms, one of which must occur as a term of $g$. Let's first consider the the case $g=x_{abc1}x_{a'b'c1}-x_ux_v$, for some $u,v$. 
For a conditional independence statement $D \indep E \ | \ F$ to give rise to $g$ it is necessary that $D \cup E \cup F = \{1, \ldots, n\}$, and $F \subseteq \{i \ | \ abc1_i=a'b'c1_i\}$. If there is a $j \notin F$ such that $abc1_j=a'b'c1_j$ then the statement $D \setminus \{j\} \indep E \setminus \{j\} \ | \ F \cup \{j\}$ also gives rise to the binomial $g$, but we need to verify that this conditional independence statement is true. No new trails connecting $D$ and $E$ can appear by removing $j$ from $E \cup D$ and adding $j$ to $F$. The problem that might arise is if $j$ would be the descendant of a vertex $j'$ satisfying {\it S2} on a trail $\pi$ connecting $D$ and $E$. In that case there would be an edge between the parents of $j'$ on $\pi$, as the induced subgraph on the non-sinks is perfect. This produces a shorter trail connecting $D$ and $E$, and we can find another vertex on $\pi$ satisfying {\it S1} or {\it S2}. By repeating this argument we may assume $F=\{i \ | \ abc1_i=a'b'c1_i\}$, which is precisely the set $C \cup \{n\}$. 
Then $k_1$ and $k_2$ must both be in $D$ (or both in $E$), otherwise the trail $k_1 \to n \leftarrow k_2$ violates the condition for $F$ separating $D$ and $E$. Every vertex in $A$ is connected to $k_1$ via a trail inside $A$, so $D$ must contain $A$. But in the same way every vertex of $B$ is connected to $k_2$, so $D$ must contain $B$. We end up with $D= A \cup B$ and $E= \emptyset$, and then $I_{D \indep E | F} = \langle 0 \rangle$. 

Another option is $g=x_{abc1}x_{a'bc'1}-x_ux_v$, for some $u,v$. By the same arguments as in the previous case, we are looking for a conditional independence statement $D \indep E \ | \ F$ with $D \cup E \cup F = \{1, \ldots, n\}$ and
\[
F=\{i \ | \ abc1_i=a'bc'1_i\} = B \cup (C \cap \pa(n)) \cup \{n\}.
\]
Then $D \cup E = A \cup (C \setminus \pa(n))$, and the induced subgraph on these vertices is connected. Again we are forced to choose $D= \emptyset$ or $E=\emptyset$, as there are no two proper subsets being separated by $F$. We get $I_{D \indep E | F} = \langle 0 \rangle$ also in this case. 

By symmetry, all other possibilities for terms of $g$ leads to the same conclusion. This proves that $f$ is not generated by binomials of degree two in $P_G$. 
\end{proof}

Note that both Theorem \ref{thm:global} and Theorem \ref{thm:quadratic} connects properties of the ideals $P_G$ and $I_{\glb(G)}$ with properties of the DAG $G$, not taking into account the numbers $\kappa_1, \ldots, \kappa_n$ of possible values of the random variables $X_1, \ldots, X_n$. Can the quadratic toric ideals $P_G$ from Theorem \ref{thm:toric_BN} be characterized by conditions of the DAG $G$? We conclude this section with an example showing that the numbers $\kappa_1, \ldots, \kappa_n$ do play a role when considering Bayesian networks that are not necessarily toric.  

\begin{figure}
\centering
\includegraphics[scale=1.3]{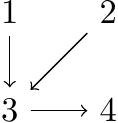}
\hspace{2cm}
\includegraphics[scale=0.9]{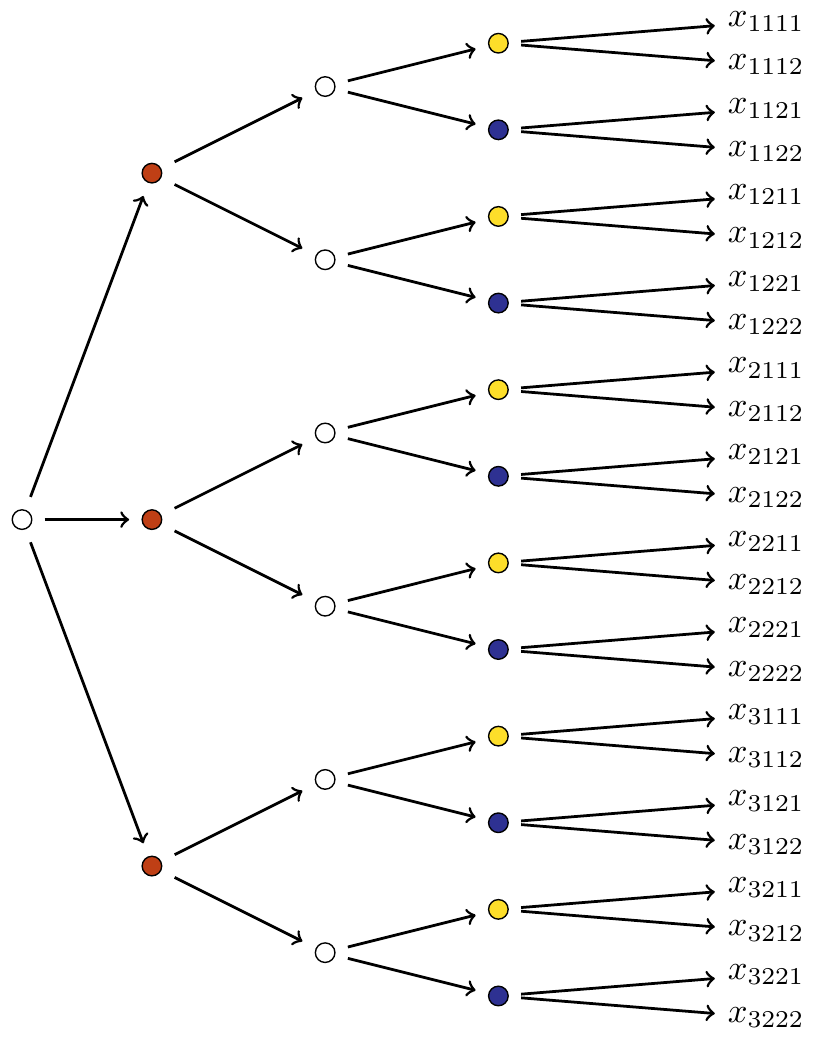}
\caption{A Bayesian net $G$ for which $P_G \ne I_{\glb(G)}$}
\label{fig:non-global}
\end{figure}

\begin{ex}
The Bayesian network $H$ on four binary random variables with the DAG in Figure \ref{fig:non-global} has $P_H=I_{\glb(H)}$. This is number 21 in \cite[Table 1]{GSS}. If we instead consider the Bayesian network $G$ on the same DAG, but where $X_1$ takes values $1,2,3$, as illustrated by the staged tree in Figure \ref{fig:non-global}, then $P_G \neq I_{\glb(G)}$. To see this, let's have a closer look at the generating set of $I_{\glb (G)}$. We have $I_{\glb{G}}=I_{2 \indep 1}+I_{4 \indep \{1,2\} | 3}$ where $I_{2 \indep 1}$ is generated by the $2 \times 2$ minors of the matrix 
\[
\begin{pmatrix}
x_{11++} & x_{21++} & x_{31++} \\
x_{12++} & x_{22++} & x_{32++}
\end{pmatrix}
\]
and $I_{4 \indep \{1,2\} | 3}$ is generated by the $2 \times 2$ minors of the two matrices
\[
\begin{pmatrix}
x_{1111} & x_{1211} & x_{2111} & x_{2211} & x_{3111} & x_{3211} \\
x_{1112} & x_{1212} & x_{2112} & x_{2212} & x_{3112} & x_{3212} 
\end{pmatrix},
\]\[
\begin{pmatrix}
x_{1121} & x_{1221} & x_{2121} & x_{2221} & x_{3121} & x_{3221} \\
x_{1122} & x_{1222} & x_{2122} & x_{2222} & x_{3122} & x_{3222} 
\end{pmatrix}.
\]
Running the Macaulay2 command {\tt trim ann(x\_1111+x\_1112)} in the quotient ring $\R[\boldsymbol{x}]/I_{\glb(G)}$ shows that $x_{111+}$ has zero divisors of degree 3. Hence we have forms $f$ of degree 3 such that $x_{111+}f \in I_{\glb(G)} \subseteq P_G$, and as $x_{111+} \notin P_G$ we have $f \in P_G$. As $f \notin I_{\glb(G)}$ it follows that $P_G \ne I_{\glb(G)}$. This means that $P_G$ is not quadratic, or Conjecture \ref{conj:global} is wrong. 

Hoping to shed some light on the nature of generating sets of ideals $P_G$ which are not necessarily toric, we here give a concrete construction of the forms $f$, deduced by analyzing the output of the Macaulay2 computation. Let  
\[
M=\begin{pmatrix}
x_{111+} & x_{211+} & x_{311+} \\
x_{111+}+x_{112+} & x_{211+}+x_{212+} & x_{311+}+x_{312+} \\
x_{121+}+x_{122+} & x_{221+}+x_{222+} & x_{321+}+x_{322+} 
\end{pmatrix}.
\]
Note that rows 2 and 3 are the matrix defining $I_{2 \indep 1}$, and hence $\det(M) \in I_{\glb(G)}$. If we were to delete all terms $x_{ij2+}$ in $M$ the resulting matrix has determinant zero, as rows 1 and 2 becomes identical. Hence, when expanding $\det(M)$ as a polynomial in $x_{ijk+}$ the terms only supported in variables $x_{ij1+}$ cancel. Hence we can write $\det(M)$ as a linear combination of terms $x_ux_vx_w$ where one of $u,v,w$ has its third entry equal to 1, and another one has its third entry equal to 2. Next, we obtain a form $f$ from $\det(M)$ by replacing each term $x_{ij1+}x_{k\ell2+}x_{mno+}$   by $x_{ij11}x_{k\ell21}x_{mno+}$. Considering the map $\varphi: \R[\boldsymbol{x}] \to \R[\Theta,z]/\langle \boldsymbol{\theta} -z \rangle$ with $P_G = \ker \varphi$ we have
\[
\varphi(x_{ij11}x_{k \ell 21})=\theta(X_4=1 \ | \ X_3=1)\theta(X_4=1 \ | \ X_3=2) \frac{\varphi(x_{ij1+}x_{k \ell 2+})}{z^2}.
\]
Applying this to every term of $f$ we get 
\[
\varphi(f)=\theta(X_4=1 \ | \ X_3=1)\theta(X_4=1 \ | \ X_3=2) \frac{\varphi(\det(M))}{z^2}=0. \qedhere
\]
\end{ex}

To the best of the authors knowledge, there is no known example of a Bayesian net without induced cycles of length $\ge 3$, and where the induced subgraph of the non-sinks is perfect, but the ideal $P_G$ is not quadratic.  

\section{Open problems}
In this last section we suggest three open problems on prime ideals of Bayesian networks. The first two questions asks whether Theorem \ref{thm:toric_BN} and Theorem \ref{thm:quadratic} gives complete characterizations of toric Bayesian networks, and toric Bayesian networks  for which $P_G=I_{\glb(G)}$. 

\begin{que}
Are all toric Bayesian networks characterized by DAGs such that the induced subgraphs on the non-sinks are perfect?
\end{que}

\begin{que}
Let $G$ be a Bayesian network such that the induced subgraph on the non-sinks is perfect. Is $P_{G}=I_{\glb(G)}$ if and only if $G$ contains no induced cycle of length greater than three?
\end{que}

If $P_G$ is toric then the algebra $\R[\boldsymbol{x}]/P_G$ is isomorphic to a monomial algebra. When $G$ is perfect we know that $P_G$ is toric, and by \cite[Theorem 3.10]{GMN} the algebra $\R[\boldsymbol{x}]/P_G$ is normal and Cohen-Macaulay. Computation shows that the same holds for every Bayesian network $G$ on four binary random variables. In the case $G$ is toric the software Normaliz \cite{Normaliz} was used to check whether the monomial parameterization defines a normal algebra. In those cases where we do not have a monomial parameterization an intermediate step is needed, as described in Example \ref{ex:normal}. 

\begin{ex}\label{ex:normal}
Let $G$ be the Bayesian network in Figure \ref{fig:non-toric}. We can define $P_G$ as the kernel of the map $\R[\boldsymbol{x}] \to \R[\Theta,z]$ defined by $x_{ijk\ell} \mapsto f_{ijk\ell}$ where
\[
\begin{aligned}
f_{1111}&=\theta_1\theta_2\theta_{31}\theta_{41},\\
f_{1112}&=\theta_1\theta_2\theta_{31}(z-\theta_{41}),\\
f_{1121}&=\theta_1\theta_2(z-\theta_{31})\theta_{42},\\
f_{1122}&=\theta_1\theta_2(z-\theta_{31})(z-\theta_{42}),\\
f_{1211}&=\theta_1(z-\theta_2)\theta_{32}\theta_{43},\\
f_{1212}&=\theta_1(z-\theta_2)\theta_{32}(z-\theta_{43}),\\
f_{1221}&=\theta_1(z-\theta_2)(z-\theta_{32})\theta_{44},\\
f_{1222}&=\theta_1(z-\theta_2)(z-\theta_{32})(z-\theta_{44}),
\end{aligned}
\ 
\begin{aligned}
f_{2111}&=(z-\theta_1)\theta_2\theta_{33}\theta_{41},\\
f_{2112}&=(z-\theta_1)\theta_2\theta_{33}(z-\theta_{41}),\\
f_{2121}&=(z-\theta_1)\theta_2(z-\theta_{33})\theta_{42},\\
f_{2122}&=(z-\theta_1)\theta_2(z-\theta_{33})(z-\theta_{42}),\\
f_{2211}&=(z-\theta_1)(z-\theta_2)\theta_{34}\theta_{43},\\
f_{2212}&=(z-\theta_1)(z-\theta_2)\theta_{34}(z-\theta_{43}),\\
f_{2221}&=(z-\theta_1)(z-\theta_2)(z-\theta_{34})\theta_{44},\\
f_{2222}&=(z-\theta_1)(z-\theta_2)(z-\theta_{34})(z-\theta_{44}).
\end{aligned}
\]
Then $A := \R[f_{1111}, \ldots, f_{2222}] \cong \R[\boldsymbol{x}]/P_G$. Let $\succ$ denote the degree reverse lexicographic term order on $\R[\Theta]$  with 
$
\theta_1 \succ \theta_2 \succ \theta_{31} \succ \dots \succ \theta_{34} \succ \theta_{41} \succ \dots \succ \theta_{44} \succ z. 
$
The \emph{initial algebra} $\init_{\succ}(A)$ is the monomial algebra generated by all leading terms of polynomials in $A$, w.\,r.\,t.\ the term order $\succ$. We run the commands 
\begin{verbatim}
A = flatten entries gens sagbi B
inA = A/leadTerm
N = gens normalToricRing inA
sort inA == sort N
\end{verbatim}
in Macaulay2 with the packages {\tt Normaliz} and {\tt SubalgebraBases} \cite{SubalgebraBases} loaded, where the input {\tt B} is a list of the polynomials $f_{1111}, \ldots, f_{2222}$. The first two rows computes the monomials generating the algebra $\init_{\succ}(A)$, and the third row computes the normalization of $\init_{\succ}(A)$. The last row checks that $\init_{\succ}(A)$ and its normalization in fact has the same generators, so $\init_{\succ}(A)$ is normal. By \cite[Corollary 2.3]{CHV} the original algebra $A$ is then  normal and Cohen-Macaulay. 
\end{ex}

\begin{que}
Is the ring $\R[\boldsymbol{x}]/P_G$ normal and Cohen-Macaulay for every Bayesian network $G$?
\end{que}

\subsection*{Acknowledgements}
I would like to thank Aldo Conca for our discussions about commutative algebra of Bayesian networks, and especially for suggesting the technique used in the proof of Theorem \ref{thm:non-toric}. Thanks also to Christiane Görgen for helpful comments on a draft of this manuscript. Finally, I thank the two anonymous referees for their careful reading.

\bibliographystyle{plain}
\bibliography{Bayesian_nets_references}

\begin{thebibliography}{10}

\bibitem{Ananiadi-Duarte}
Lamprini Ananiadi and Eliana Duarte.
\newblock Gröbner bases for staged trees.
\newblock {\em Algebraic Statistics}, 12(1):1--20, 2021.

\bibitem{BES}
Niko Beerenwinkel, Nicholas Eriksson, and Bernd Sturmfels.
\newblock {Conjunctive Bayesian networks}.
\newblock {\em Bernoulli}, 13(4):893 -- 909, 2007.

\bibitem{Normaliz}
Winfried Bruns, Bogdan Ichim, Christof Söger, and Ulrich von~der Ohe.
\newblock Normaliz. {A}lgorithms for rational cones and affine monoids.
\newblock Available at \url{https://www.normaliz.uni-osnabrueck.de}.

\bibitem{CGS}
Rodrigo~A. Collazo, Christiane Görgen, and Jim~Q. Smith.
\newblock {\em Chain Event Graphs}.
\newblock CRC Computer Science and Data Analysis Series. Chapman \& Hall, 2017.

\bibitem{CHV}
Aldo Conca, Jürgen Herzog, and Giuseppe Valla.
\newblock Sagbi bases with applications to blow-up algebras.
\newblock {\em Journal für die Reine und Angewandte Mathematik}, 474:113--138,
  1996.

\bibitem{Dobra}
Adrian Dobra.
\newblock Markov bases for decomposable graphical models.
\newblock {\em Bernoulli}, 9(6):1093--1108, 2003.

\bibitem{Duarte-Goergen}
Eliana Duarte and Christiane Görgen.
\newblock Equations defining probability tree models.
\newblock {\em Journal of Symbolic Computation}, 99:127--146, 2020.

\bibitem{Duarte-Solus}
Eliana Duarte and Liam Solus.
\newblock A new characterization of discrete decomposable graphical models.
\newblock {\em Proceedings of the American Mathematical Society},
  151:1325--1338, 2023.

\bibitem{GSS}
Luis~David Garcia, Michael Stillman, and Bernd Sturmfels.
\newblock Algebraic geometry of {B}ayesian networks.
\newblock {\em Journal of Symbolic Computation}, 39:331--355, 2005.

\bibitem{GMS}
Dan Geiger, Christopher Meek, and Bernd Sturmfels.
\newblock {On the toric algebra of graphical models}.
\newblock {\em The Annals of Statistics}, 34(3):1463 -- 1492, 2006.

\bibitem{M2}
Daniel~R. Grayson and Michael~E. Stillman.
\newblock Macaulay2, a software system for research in algebraic geometry.
\newblock Available at \url{https://math.uiuc.edu/Macaulay2/}.

\bibitem{GMN}
Christiane Görgen, Aida Maraj, and Lisa Nicklasson.
\newblock Staged tree models with toric structure.
\newblock {\em Journal of Symbolic Computation}, 113:242--268, 2022.

\bibitem{Hosten-Sullivant}
Serkan Hoşten and Seth Sullivant.
\newblock Gröbner bases and polyhedral geometry of reducible and cyclic
  models.
\newblock {\em Journal of Combinatorial Theory, Series A}, 100(2):277--301,
  2002.

\bibitem{Lauritzen}
Steffen~L. Lauritzen.
\newblock {\em Graphical Models}.
\newblock Oxford Statistical Science Series. Oxford University Press, 1996.

\bibitem{Rapallo}
Fabio Rapallo.
\newblock Toric statistical models: parametric and binomial representations.
\newblock {\em Annals of the Institute of Statistical Mathematics},
  59:727--740, 2007.

\bibitem{SA}
Jim~Q. Smith and Paul~E. Anderson.
\newblock Conditional independence and chain event graphs.
\newblock {\em Artificial Intelligence}, 172(1):42--68, 2008.

\bibitem{Sturmfels_solving}
Bernd Sturmfels.
\newblock Solving systems of polynomial equations.
\newblock In {\em American Mathematical Society, CBMS Regional Conferences
  Series, No. 97}, 2002.

\bibitem{Sullivant}
Seth Sullivant.
\newblock {\em Algebraic Statistics}, volume 194 of {\em Graduate studies in
  Mathematics}.
\newblock AMS, 2018.

\bibitem{Takken}
A.R. Takken.
\newblock {\em Monte Carlo Goodness-of-fit Tests for Discrete Data}.
\newblock Ph.D. thesis, Stanford University, 1999.

\end{thebibliography}

\end{document}